\documentclass[11pt]{amsart}

\usepackage{amssymb}
\usepackage{amsmath}
\usepackage{amsthm}
\usepackage{xcolor}


\theoremstyle{plain}

\newtheorem{theorem}{Theorem}[section]

\newtheorem{lemma}[theorem]{Lemma}
\newtheorem{proposition}[theorem]{Proposition}
\newtheorem{BA}[theorem]{Basic assumptions}
\newtheorem{corollary}[theorem]{Corollary}

\newtheorem{fact}[theorem]{Fact}

\theoremstyle{definition}

\newtheorem{definition}[theorem]{Definition}
\newtheorem{remark}[theorem]{Remark}
\newtheorem{example}[theorem]{Example}

\newtheorem*{problem*}{Problem}

\newtheorem{problem}[theorem]{Problem}

\newcommand{\dist}{\mathrm{dist}}
\newcommand{\C}{\mathrm{c}}

\newcommand{\R}{\mathbb{R}}
\newcommand{\N}{\mathbb{N}}

\newcommand{\inte}{\mathrm{int}\,}
\newcommand{\cconv}{\overline{\mathrm{conv}}\,}

\newcommand{\conv}{{\mathrm{conv}}\,}

\renewcommand{\epsilon}{\varepsilon}
\renewcommand{\phi}{\varphi}

\textwidth=5.5 true in

\hfuzz5pt 
\overfullrule=5pt



\begin{document}

\title{Regularity and stability for a convex feasibility problem}

\author{Carlo Alberto De Bernardi
}
\address{Dipartimento di Matematica per le Scienze economiche, finanziarie ed attuariali, Universit\`{a} Cattolica del Sacro Cuore, Via Necchi 9, 20123 Milano, Italy}

\email{carloalberto.debernardi@unicatt.it, carloalberto.debernardi@gmail.com}

\author{Enrico Miglierina
}
\address{Dipartimento di Matematica per le Scienze economiche, finanziarie ed attuariali, Universit\`{a} Cattolica del Sacro Cuore, Via Necchi 9, 20123 Milano, Italy}

\email{enrico.miglierina@unicatt.it}

 \subjclass[2010]{Primary: 47J25; secondary: 90C25, 90C48}

 \keywords{convex feasibility problem, stability, regularity, set-convergence, alternating projections method}

 \thanks{
}



\begin{abstract}
	Let us consider two sequences of closed convex  sets $\{A_n\}$ and $\{B_n\}$  converging with respect to the Attouch-Wets convergence to $A$ and $B$, respectively. Given a starting point $a_0$, we consider the sequences of points obtained by projecting on the ``perturbed'' sets, i.e., the sequences $\{a_n\}$ and $\{b_n\}$ defined inductively by $b_n=P_{B_n}(a_{n-1})$ and $a_n=P_{A_n}(b_n)$.  	
	Suppose that $A\cap B$ (or a suitable substitute if $A \cap B=\emptyset$) is bounded, we prove that if the couple $(A,B)$ is (boundedly) regular then the couple $(A,B)$ is $d$-stable, i.e., for each $\{a_n\}$ and $\{b_n\}$ as above we have   $\mathrm{dist}(a_n,A\cap B)\to 0$ and $\mathrm{dist}(b_n,A\cap B)\to 0$. 
\end{abstract}

\maketitle

\section{Introduction}
Let $A$ and $B$ be two closed 
convex nonempty sets  in a Hilbert space $X$. The (2-set) convex feasibility problem asks to find a point in the intersection of $A$ and $B$ (or, when $A \cap B=\emptyset$, a pair of points, one in $A$ and the other in $B$, that realizes the distance between $A$ and $B$). The relevance of this problem is  due to the fact that many mathematical and concrete problems in applications can be formulated as a convex feasibility
problem. As typical examples, we mention solution of convex inequalities, partial differential equations, minimization of convex nonsmooth functions, medical imaging, computerized tomography and image reconstruction. 

The method of alternating projections is the simplest iterative procedure for finding a solution of the convex feasibility problem and it goes back to von Neumann \cite{vonNeumann}: let us denote by $P_A$ and $P_B$ the projections on the sets $A$ and $B$, respectively, and, given a starting point $c_0\in X$, consider the {\em alternating projections sequences} $\{c_n\}$ and $\{d_n\}$ given  by $$d_n=P_{B}(c_{n-1})\ \ \text{and}\ \ c_n=P_{A}(d_n)\ \ \ \ \ (n\in\N).$$
If the sequences $\{c_n\}$ and $\{d_n\}$ converge in norm, we say that the method of alternating projections converges.
Originally, von Neumann proved that the method of alternating projection converges when $A$ and $B$ are closed subspace. Then, for two generic convex sets, the weak convergence of the alternating projection sequences was proved by Bregman in 1965 (\cite{Bregman}). Nevertheless, the problem whether the alternating projections algorithm converges in norm for each couple of convex sets remained open till the example given by Hundal in 2004 (\cite{Hundal}). This example shows that the alternating projections do not converge in norm when $A$ is a suitable convex cone and $B$ is a hyperplane touching the vertex of $A$. Moreover, this example emphasizes the importance of finding sufficient conditions ensuring the norm convergence of the alternating projections algorithm. In the literature, conditions of this type were studied (see, e.g., \cite{BauschkeBorwein93,BorweinSimsTam}), even before the example by Hundal.
Here, we focus on those conditions based on the notions of regularity, introduced in \cite{BauschkeBorwein93}. Indeed, in the present paper, we investigate the relationships between regularity of the couple $(A,B)$ (see Definition~\ref{def: regularity} below) and ``stability'' properties of the alternating projections method in the following sense. Let us suppose that 
$\{A_n\}$ and $\{B_n\}$
are two sequences of closed convex sets such that $A_n\rightarrow
A$ and $B_n\rightarrow B$ for the Attouch-Wets {variational} convergence (see Definition~\ref{def:AW}) and let us introduce the definition of {\em perturbed alternating projections sequences}.

\begin{definition}\label{def:perturbedseq} Given $a_0\in X$, the {\em perturbed alternating projections sequences}  $\{a_n\}$ and $\{b_n\}$, w.r.t. $\{A_n\}$ and $\{B_n\}$ and with starting point $a_0$, are defined inductively by
	$$b_n=P_{B_n}(a_{n-1})\ \ \ \text{and}\ \ \  a_n=P_{A_n}(b_n) \ \ \ \ \ \ \ \ \ (n\in\N).$$ 
\end{definition}

\noindent Our aim is to find some conditions on the limit sets $A$ and $B$ such that, for each choice of the sequences $\{A_n\}$ and $\{B_n\}$ and for each choice of  the starting point $a_0$, 
the corresponding perturbed alternating projections sequences  $\{a_n\}$ and $\{b_n\}$ satisfy $\mathrm{dist}(a_n,A\cap B)\to 0$ and $\mathrm{dist}(b_n,A\cap B)\to 0$. If this is the case, we say that the couple $(A,B)$ is {\em $d$-stable}.
In particular, we show that the  regularity of the couple $(A,B)$  implies not only the norm convergence of the alternating projections sequences for the couple $(A,B)$ (as already known from \cite{BauschkeBorwein93}), but also that the couple $(A,B)$ is $d$-stable. This result 
  might be interesting also in  applications since real data are often affected by some uncertainties. Hence stability of the convex feasibility problem with respect to data perturbations is a desirable property, also in view of computational developments. 

Let us conclude the introduction by a brief description of the structure of the paper. In Section \ref{SEction notations}, we list some notations and definitions, and we recall some well-known facts about the alternating projections method. Section~\ref{Section regularity} is devoted to various notions of regularity and their relationships. It is worth pointing out that in this section we provide a new and alternative proof of the convergence of the alternating projections algorithm under regularity assumptions. This proof well illustrates the main geometrical idea behind the proof of our main result Theorem~\ref{teo: mainHilbert}, stated and proved in Section \ref{Section Main Result}. This result shows that {\em a regular couple $(A,B)$ is $d$-stable whenever $A\cap B$ (or a suitable substitute if $A \cap B=\emptyset$) is bounded}. 
Corollaries~\ref{Corollary:stronglyexp}, \ref{corollary:corpilur}, and \ref{cor:sottospazisommachiusa} simplify and generalize some of the results  obtained in \cite{DebeMigl}, since there we considered only the case where $A \cap B\neq\emptyset$ whereas, in the present paper, we encompass also the situation where the intersection of $A$ and $B$ is empty. We conclude the paper with Section 5, where we discuss the necessity of the assumptions of our main result and we state a natural open problem: suppose that $A \cap B$ is bounded, is regularity equivalent to $d$-stability?

\section{Notation and preliminaries }\label{SEction notations}

Throughout all this paper, $X$ denotes a nontrivial real normed space with
the topological dual $X^*$. We
denote by $B_X$ and $S_X$ the closed unit ball and the unit sphere of $X$, respectively. 
For $x,y\in X$, $[x,y]$ denotes the closed segment in $X$ with
endpoints $x$ and $y$, and $(x,y)=[x,y]\setminus\{x,y\}$ is the
corresponding ``open'' segment. 
For a subset $A$ of $X$, we denote by $\inte(A)$, $\conv(A)$ and
$\cconv(A)$ the interior, the convex hull and the closed convex
hull of $A$, respectively.
Let us recall that a body is a closed convex setsin $X$ with nonempty interior.

 We denote by $$\textstyle \mathrm{diam}(A)=\sup_{x,y\in A}\|x-y\|,$$
the (possibly infinite) diameter of $A$. For $x\in X$, let
$$\dist(x,A) =\inf_{a\in A} \|a-x\|.$$ Moreover, given $A,B$
nonempty subsets of $X$,  we denote by $\dist(A,B)$ the usual
``distance'' between $A$ and $B$, that is,
$$ \dist(A,B)=\inf_{a\in A} \dist(a,B).$$

Now, we recall two notions of convergence for sequences of sets (for a wide overview about this topic see, e.g., \cite{Beer}). By $\C(X)$ we denote the family of all nonempty closed subsets of
	$X$.
Let us introduce the (extended) Hausdorff metric $h$ on
$\C(X)$. For $A,B\in\C(X)$, we define the excess of $A$ over $B$
as
$$e(A,B) = \sup_{a\in A} \mathrm{dist}(a,B).$$
\noindent Moreover, if $A\neq\emptyset$ and $B=\emptyset$ we put
$e(A,B)=\infty$,  if $A=\emptyset$ we put $e(A,B)=0$. Then, we define

$$h(A,B)=\max \bigl\{ e(A,B),e(B,A) \bigr\}.$$

\begin{definition} A sequence $\{A_j\}$ in $\C(X)$ is said to
	Hausdorff converge to $A\in\C(X)$ if $$\textstyle \lim_j h(A_j,A)
	= 0.$$
\end{definition}

As the second notion of convergence, we consider the so called Attouch-Wets convergence (see,
e.g., \cite[Definition~8.2.13]{LUCC}), which can be seen as a
localization of the Hausdorff convergence.  If $N\in\N$ and
$A,B\in\C(X)$, define
\begin{eqnarray*}
	e_N(A,B) &=& e(A\cap N B_X, B)\in[0,\infty),\\
	h_N(A,B) &=& \max\{e_N(A,B), e_N(B,A)\}.
\end{eqnarray*}

\begin{definition}\label{def:AW} A sequence $\{A_j\}$ in $\C(X)$ is said to
	Attouch-Wets converge to $A\in\C(X)$ if, for each $N\in\N$,
	$$\textstyle \lim_j h_N(A_j,A)= 0.$$
\end{definition}

We conclude this section by recalling some well known results about distance between two convex sets and projection of a point onto a convex set.

Suppose that $A$ and $B$ are closed convex nonempty subsets of $X$, we denote
\begin{eqnarray*}
	E&=&\{a\in A;\, d(a,B)=d(A,B)\},\\
	F&=&\{b\in B;\, d(b,B)=d(A,B)\}.
\end{eqnarray*}

{If $C$ is a nonempty closed convex subset of $X$, the projection of a point $x$ onto $C$ is denoted by $P_Cx$.}  
We say that $v=P_{\overline{B-A}}(0)$ is the {\em displacement vector} for the couple $(A,B)$.
It is clear that if $A\cap B\neq\emptyset$ then $E=F=A\cap B$ and the displacement vector for the couple $(A,B)$ is null.
We {recall} the following fact, where, given a map $T:X\rightarrow X$, $\mathrm{Fix}(T)$ denotes the set of fixed points of $T$.

\begin{fact}[{\cite[Fact~1.1]{BauschkeBorwein93}}]\label{fact: BB93} Suppose that $X$ is a Hilbert space and that $A,B$ are closed convex nonempty subsets of $X$. Then we have:
	\begin{enumerate}
		\item $\|v\|=d(A,B)$ and $E+v=F$;
		\item $E=\mathrm{Fix}(P_A P_B)=A\cap(B-v)$ and $F=\mathrm{Fix}(P_B P_A)=B\cap(A+v)$;
		\item $P_B e=P_F e=e+v$ ($e\in E$) and $P_A f=P_E f=f-v$ ($f\in F$).
 	\end{enumerate}
	\end{fact}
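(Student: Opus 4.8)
The plan is to derive all three items from a single elementary observation: for every $a\in A$ and $b\in B$ one has $\|a-b\|\ge\|v\|$, with equality precisely when $b-a=v$. Indeed, $\dist(A,B)=\inf\{\|b-a\|:a\in A,\ b\in B\}=\dist(0,B-A)=\dist\bigl(0,\overline{B-A}\bigr)$, and since $\overline{B-A}$ is a nonempty closed convex subset of the Hilbert space $X$, the projection $v=P_{\overline{B-A}}(0)$ is the unique point of $\overline{B-A}$ of minimal norm; hence $\|v\|=\dist(A,B)$, which is the first assertion of (1). Moreover $b-a\in\overline{B-A}$ for all such $a,b$, so $\|a-b\|\ge\|v\|$, and if $\|a-b\|=\|v\|$ then $b-a$ is a minimal-norm element of $\overline{B-A}$, whence $b-a=v$ by uniqueness.

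Next I would pin down $E$ and $F$. Fix $a\in A$ and put $b=P_Ba$; then $\dist(a,B)=\|a-b\|\ge\|v\|=\dist(A,B)$, so $a\in E$ iff $\|a-b\|=\|v\|$ iff $b-a=v$, i.e. iff $a+v\in B$ (conversely, if $a+v\in B$ then $a+v$ attains $\dist(a,B)=\|v\|$, so $P_Ba=a+v$ by uniqueness of the nearest point). Thus $E=A\cap(B-v)$ and $P_Be=e+v$ for $e\in E$; the symmetric argument — noting that $-v$ is the displacement vector of $(B,A)$ — gives $F=B\cap(A+v)$ and $P_Af=f-v$ for $f\in F$. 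Adding $v$ to $E=A\cap(B-v)$ yields $E+v=(A+v)\cap B=F$. This establishes (1) and the set equalities in (2).

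For the fixed-point descriptions in (2), one inclusion is immediate: if $e\in E$ then $P_Be=e+v\in E+v=F$, so by $F=B\cap(A+v)$ we get $P_A(e+v)=(e+v)-v=e$, that is $P_AP_Be=e$. For the converse, take $x\in\mathrm{Fix}(P_AP_B)$ and set $b=P_Bx\in B$, $a=P_Ab$; then $a=x\in A$, and since $a=x$ also $b=P_Ba$. The variational inequalities $\langle b-a,\,a'-a\rangle\le0$ ($a'\in A$) and $\langle a-b,\,b'-b\rangle\le0$ ($b'\in B$) expressing $a=P_Ab$ and $b=P_Ba$ then combine, for arbitrary $a'\in A$ and $b'\in B$, into
\[
\langle b-a,\,b'-a'\rangle=\langle b-a,\,b'-b\rangle+\|b-a\|^2+\langle b-a,\,a-a'\rangle\;\geq\;\|b-a\|^2,
\]
so $\langle b-a,\,c\rangle\ge\|b-a\|^2$ for every $c\in B-A$, hence, by continuity, for every $c\in\overline{B-A}$; since $b-a\in\overline{B-A}$ this is exactly the variational characterization of $P_{\overline{B-A}}(0)$, so $b-a=v$ and therefore $a+v=b\in B$, i.e. $a\in A\cap(B-v)=E$ and $x=a\in E$. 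The statement $F=\mathrm{Fix}(P_BP_A)$ follows by interchanging $A$ and $B$. I expect this passage — identifying $b-a$ with the displacement vector — to be the only genuine obstacle: it is the single place where the geometry of the Hilbert projection is used essentially, through both projection inequalities together with the density of $B-A$ in $\overline{B-A}$; everything else is bookkeeping around the inequality $\|a-b\|\ge\|v\|$.

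Finally, for (3): given $e\in E$ we already have $P_Be=e+v$, and since $e+v\in F\subseteq B$ and $\|e-(e+v)\|=\|v\|=\dist(e,B)\le\dist(e,F)$, the point $e+v$ is also the nearest point of $F$ to $e$, so $P_Fe=e+v$. The remaining formulas $P_Af=f-v=P_Ef$ for $f\in F$ are obtained by the same argument with $A$ and $B$ interchanged.
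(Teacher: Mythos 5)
Your proof is correct and complete. The paper itself gives no argument for this statement --- it is imported verbatim as Fact~1.1 of the cited Bauschke--Borwein paper --- so there is nothing internal to compare against; your derivation (reducing everything to the uniqueness of the minimal-norm element of $\overline{B-A}$, and identifying $b-a$ with $v$ by combining the two projection variational inequalities into $\langle b-a,\,c\rangle\ge\|b-a\|^2$ for all $c\in\overline{B-A}$) is exactly the standard route to this fact and all steps check out, including the equality case $\|a-b\|=\|v\|\Leftrightarrow b-a=v$ and the closedness and convexity of $F=B\cap(A+v)$ needed for $P_Fe$ to make sense.
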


\section{Notions of regularity for a couple of convex sets} \label{Section regularity}
{
In this section we introduce some notions of regularity for a couple of nonempty closed convex sets $A$ and $B$. This class of notions was originally introduced in \cite{BauschkeBorwein93}, in order to obtain some conditions ensuring the norm convergence of the alternating projections algorithm (see, also, \cite{BorweinZhu}).
Here we list three different type of regularity: (i) and (ii) are exactly as they appeared in \cite{BauschkeBorwein93}, whereas (iii) is new.
}

\begin{definition}\label{def: regularity} Let $X$ be a Hilbert space and  $A,B$  closed convex nonempty subsets of $X$. Suppose that $E,F$ are nonempty. We say that the couple $(A,B)$ is:\begin{enumerate}
		\item {\em regular} if for each $\epsilon>0$ there exists $\delta>0$ such that $\mathrm{dist}(x,E)\leq \epsilon$, whenever $x\in X$ satisfies
		$$\max\{\mathrm{dist}(x,A),\mathrm{dist}(x,B-v)\}\leq\delta;$$
\item {\em boundedly regular} if for each bounded set $S\subset X$ and for each $\epsilon>0$ there exists $\delta>0$ such that $\mathrm{dist}(x,E)\leq \epsilon$, whenever $x\in S$ satisfies
$$\max\{\mathrm{dist}(x,A),\mathrm{dist}(x,B-v)\}\leq\delta;$$
\item {\em linearly regular for points bounded away from $E$} if for each $\epsilon>0$ there exists $K>0$ such that 
$$\mathrm{dist}(x,E)\leq K\max\{\mathrm{dist}(x,A),\mathrm{dist}(x,B-v)\},$$
whenever  $\mathrm{dist}(x,E)\geq \epsilon$.
	\end{enumerate}  
\end{definition}

The following proposition shows that (i) and (iii) in the definition above are equivalent. The latter part of the proposition is a generalization of \cite[Theorem~3.15]{BauschkeBorwein93}.

\begin{proposition}\label{prop: regular-largedistances} Let $X$ be a Hilbert space and  $A,B$  closed convex nonempty subsets of $X$. Suppose that $E, F$ are nonempty. Let us consider the following conditions. \begin{enumerate}
		\item The couple $(A,B)$ is regular.
		\item  The couple $(A,B)$ is boundedly regular.
		\item The couple $(A,B)$
		is linearly regular for points bounded away from $E$.
	\end{enumerate}
Then $(iii)\Leftrightarrow(i)\Rightarrow(ii)$. Moreover, if $E$ is bounded, then $(ii)\Rightarrow(i)$.
\end{proposition}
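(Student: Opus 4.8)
The plan is to prove the chain $(iii)\Leftrightarrow(i)\Rightarrow(ii)$ and then, under the assumption that $E$ is bounded, close the loop with $(ii)\Rightarrow(i)$. The implication $(i)\Rightarrow(ii)$ is essentially immediate from the definitions: bounded regularity only asks for the regularity estimate on points lying in a prescribed bounded set $S$, so the $\delta$ furnished by regularity (which works for all $x\in X$) works a fortiori. The implication $(iii)\Rightarrow(i)$ is also direct: given $\epsilon>0$, apply $(iii)$ with this $\epsilon$ to get $K>0$, and set $\delta=\epsilon/K$; then any $x$ with $\max\{\dist(x,A),\dist(x,B-v)\}\le\delta$ must satisfy $\dist(x,E)<\epsilon$, since otherwise $(iii)$ would give $\dist(x,E)\le K\delta=\epsilon$ — wait, one has to be slightly careful at the boundary case $\dist(x,E)=\epsilon$, so it is cleaner to run $(iii)$ with $\epsilon/2$ and conclude $\dist(x,E)\le\epsilon/2<\epsilon$ whenever the max is $\le(\epsilon/2)/K$.

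The substantive implication is $(i)\Rightarrow(iii)$. Here I would argue by contradiction: suppose $(iii)$ fails for some $\epsilon_0>0$. Then for every $k\in\N$ there is a point $x_k$ with $\dist(x_k,E)\ge\epsilon_0$ but $\dist(x_k,E)> k\,\max\{\dist(x_k,A),\dist(x_k,B-v)\}$. Write $m_k=\max\{\dist(x_k,A),\dist(x_k,B-v)\}$, so $m_k< \dist(x_k,E)/k$. The key geometric idea is to rescale: since $E$ is convex and closed, pick $e_k=P_E x_k$ and look at the point $y_k$ on the segment $[e_k,x_k]$ at a fixed distance, say $y_k=e_k+\epsilon_0\frac{x_k-e_k}{\|x_k-e_k\|}$. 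By convexity of the functions $\dist(\cdot,A)$ and $\dist(\cdot,B-v)$ and the fact that $\dist(e_k,A)=\dist(e_k,B-v)=0$ (because $e_k\in E\subset A$ and $E=A\cap(B-v)$ by Fact~\ref{fact: BB93}(2), so $e_k\in B-v$ as well), one gets $\dist(y_k,A)\le\frac{\epsilon_0}{\|x_k-e_k\|}\dist(x_k,A)\le\frac{\epsilon_0}{\epsilon_0}m_k=m_k$ and likewise for $B-v$; hence $\max\{\dist(y_k,A),\dist(y_k,B-v)\}\le m_k\to 0$. On the other hand $\|y_k-e_k\|=\epsilon_0$, and since $e_k$ is the nearest point of $E$ to $x_k$ and $y_k$ lies on $[e_k,x_k]$, one checks $P_E y_k=e_k$, so $\dist(y_k,E)=\epsilon_0$ for all $k$. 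This contradicts $(i)$: the regularity of $(A,B)$ applied with $\epsilon=\epsilon_0/2$ forces $\dist(y_k,E)\le\epsilon_0/2$ once $m_k\le\delta$.

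For the final assertion $(ii)\Rightarrow(i)$ under the hypothesis that $E$ is bounded: fix $\epsilon>0$ and let $R=\dist(x_0,E)+\mathrm{diam}(E)+\epsilon+1$ for some fixed $x_0$... actually more simply, I would show that any $x$ witnessing a violation of regularity can be taken in a fixed bounded set. If $\max\{\dist(x,A),\dist(x,B-v)\}\le\delta\le 1$ and $\dist(x,E)>\epsilon$, then picking $a\in A$ with $\|x-a\|\le\delta+1$... the cleanest route: because $E=A\cap(B-v)$ is bounded, it is contained in some ball $M B_X$, and I claim there is $\rho>0$ (depending only on $\epsilon$, $M$, and the geometry) such that $\dist(x,E)\le\epsilon$ automatically holds for every $x$ with $\|x\|>\rho$ and $\max\{\dist(x,A),\dist(x,B-v)\}$ small — no, that is false in general (think of $A=B-v$ a half-space). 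Instead I use the $(i)\Leftrightarrow(iii)$ equivalence in reverse: it suffices to prove $(iii)$ from $(ii)$. Given $\epsilon>0$, the set $S=\{x:\dist(x,E)\le 2\epsilon\}$ is... not bounded. So the right statement is: apply $(ii)$ with $S=(M+2)B_X$ to handle points of moderate distance from $E$, and handle far-away points by the rescaling trick run in reverse — any $x$ with $\dist(x,E)\ge 2\epsilon$ and $m(x):=\max\{\dist(x,A),\dist(x,B-v)\}$ small gives, via the segment $[P_Ex,x]$, a point $y$ with $\dist(y,E)=2\epsilon$, $\|y\|\le M+2\epsilon$, and $m(y)\le m(x)$; feeding $y$ into the bounded-regularity estimate on $S=(M+2\epsilon)B_X$ (with threshold $\epsilon$) gives $\dist(y,E)\le\epsilon$, contradicting $\dist(y,E)=2\epsilon$. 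Thus $m(x)$ cannot be too small, which is exactly a linear-regularity-type bound, and by the already-established $(iii)\Rightarrow(i)$ we are done. The main obstacle is getting this last reduction clean: one must verify carefully that $P_E y=P_E x$ when $y\in[P_Ex,x]$ (a standard consequence of the variational inequality characterizing projections onto convex sets) and that the rescaling contracts the distances to $A$ and to $B-v$ by the right factor, which relies on $E\subseteq A\cap(B-v)$ so that the nearest point $e=P_Ex$ is a common zero of both distance functions.
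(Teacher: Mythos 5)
Your proof follows essentially the same route as the paper: the nontrivial implications $(i)\Rightarrow(iii)$ and $(ii)\Rightarrow(i)$ are both handled by rescaling a putative counterexample $x$ along the segment $[P_E x,x]$ to a point at fixed distance from $E$, using convexity (and $E=A\cap(B-v)$) to contract the distances to $A$ and $B-v$, and, for the last implication, using the boundedness of $E$ to place the rescaled points in a fixed ball so that bounded regularity applies. One small slip in $(i)\Rightarrow(iii)$: you conclude $\max\{\dist(y_k,A),\dist(y_k,B-v)\}\le m_k\to 0$, but $m_k<\dist(x_k,E)/k$ does not force $m_k\to 0$ when $\dist(x_k,E)$ is unbounded; the fix is already contained in your own displayed inequality --- keep the contraction factor and write $\dist(y_k,A)\le\frac{\epsilon_0}{\dist(x_k,E)}\,\dist(x_k,A)\le\frac{\epsilon_0}{\dist(x_k,E)}\,m_k<\epsilon_0/k\to 0$, which is exactly how the paper closes this step. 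The remaining ingredients (the identity $P_Ey=P_Ex$ for $y\in[P_Ex,x]$, which you correctly flag as needed to get $\dist(y_k,E)=\epsilon_0$ rather than merely $\le\epsilon_0$) are correct and consistent with the paper's argument.
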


\begin{proof}
The implications $(iii)\Rightarrow(i)\Rightarrow(ii)$ are trivial. Let us prove that $(i)\Rightarrow(iii)$. Suppose on the contrary that there exists $\epsilon>0$ and a sequence $\{x_n\}\subset X$ such that $\mathrm{dist}(x_n,E)>\epsilon$  ($n\in\N$) and  
$$\textstyle \frac{\max\{\mathrm{dist}(x_n,A),\mathrm{dist}(x_n,B-v)\}}{\mathrm{dist}(x_n,E)}\to 0.$$
For each $n\in\N$, let $e_n\in E$, $a_n\in A$, and $b_n\in B$ be such that $\|e_n-x_n\|=\mathrm{dist}(x_n,E)$, $\|a_n-x_n\|=\mathrm{dist}(x_n,A)$, and $\|b_n-v-x_n\|=\mathrm{dist}(x_n,B-v)$. Put $\lambda_n=\frac{\epsilon}{\|e_n-x_n\|}\in(0,1)$ and define $z_n=\lambda_n x_n+(1-\lambda_n)e_n$, 
$a'_n=\lambda_n a_n+(1-\lambda_n)e_n\in A$, and $b'_n=\lambda_n b_n+(1-\lambda_n)(e_n+v)\in B$.
By our construction, it is clear that
$$\textstyle \frac{\mathrm{dist}(z_n,A)}{\epsilon}\leq\frac{\|z_n-a'_n\|}{\epsilon}=\frac{\|x_n-a_n\|}{\|e_n-x_n\|}\ \ \  \text{and}\ \ \  \frac{\mathrm{dist}(z_n,B-v)}{\epsilon}\leq\frac{\|b'_n-v-z_n\|}{\epsilon}=\frac{\|b_n-v-x_n\|}{\|e_n-x_n\|}.$$
Hence, $\mathrm{dist}(z_n,E)=\epsilon$ and $\max\{\mathrm{dist}(z_n,A),\mathrm{dist}(z_n,B-v)\}\to 0$. This contradicts (i) and the proof is concluded.

Now, suppose that $E$ is bounded, and let us prove that $(ii)\Rightarrow(i)$. Suppose on the contrary that there exists $\epsilon>0$ and a sequence $\{x_n\}\subset X$ such that $\mathrm{dist}(x_n,E)>\epsilon$  ($n\in\N$) and  
$$\textstyle \max\{\mathrm{dist}(x_n,A),\mathrm{dist}(x_n,B-v)\}\to 0.$$
For each $n\in\N$, let $e_n\in E$, $a_n\in A$, and $b_n\in B$ be such that $\|e_n-x_n\|=\mathrm{dist}(x_n,E)$, $\|a_n-x_n\|=\mathrm{dist}(x_n,A)$, and $\|b_n-v-x_n\|=\mathrm{dist}(x_n,B-v)$. Put $\lambda_n=\frac{\epsilon}{\|e_n-x_n\|}\in(0,1)$ and define $z_n=\lambda_n x_n+(1-\lambda_n)e_n$, 
$a'_n=\lambda_n a_n+(1-\lambda_n)e_n\in A$, and $b'_n=\lambda_n b_n+(1-\lambda_n)(e_n+v)\in B$.
By our construction, it is clear that
$$\textstyle {\mathrm{dist}(z_n,A)}\leq{\|z_n-a'_n\|}\leq{\|x_n-a_n\|}$$
and
$$ {\mathrm{dist}(z_n,B-v)}\leq{\|b'_n-v-z_n\|}\leq{\|b_n-v-x_n\|}.$$
Hence, $\mathrm{dist}(z_n,E)=\epsilon$ and $\max\{\mathrm{dist}(z_n,A),\mathrm{dist}(z_n,B-v)\}\to 0$. Moreover, since $E$ is bounded $\{z_n\}$ is a bounded sequence. This contradicts (ii) and the proof is concluded.
\end{proof}

The following theorem follows by \cite[Theorem~3.7]{BauschkeBorwein93}. 

\begin{theorem}
Let $X$ be a Hilbert space and  $A,B$  closed convex nonempty subsets of $X$.  Suppose that the couple $(A,B)$ is regular. Then the alternating projections method converges.
\end{theorem}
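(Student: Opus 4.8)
My plan is to deduce from regularity that the iterates eventually get arbitrarily close to $E$, and then to upgrade this to genuine norm convergence by exploiting Fej\'er monotonicity. Throughout, write $T=P_AP_B$, so that $c_n=T^nc_0$; by Fact~\ref{fact: BB93}(ii) we have $\mathrm{Fix}(T)=E=A\cap(B-v)$, and this set is nonempty because $(A,B)$ is assumed regular.

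The preliminary step, which I expect to be the only genuinely technical part, is the classical asymptotic description of the iteration: the sequences $\{c_n\}$ and $\{d_n\}$ are bounded and the ``gap vectors'' converge in norm, $c_n-d_{n+1}\to -v$, equivalently $\|c_n-(d_{n+1}-v)\|\to 0$. I would either invoke, or reprove as in \cite{BauschkeBorwein93}, the following chain: firm nonexpansiveness of $P_A$ and $P_B$ gives that $\{c_n\}$ and $\{d_n\}$ are Fej\'er monotone with respect to $E$ and $F$ (hence bounded), that $\|c_n-d_{n+1}\|$ and $\|d_n-c_n\|$ are nonincreasing and $\|c_{n+1}-c_n\|\to 0$, $\|d_{n+1}-d_n\|\to 0$; one then shows that the limit of these distances equals $d(A,B)=\|v\|$ and, since in a Hilbert space weak convergence together with convergence of norms forces norm convergence, upgrades the weak limit $c_n-d_{n+1}\rightharpoonup -v$ to a norm limit. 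This is exactly the input carried over from \cite[Theorem~3.7]{BauschkeBorwein93}.

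Granting this, the rest is soft. Since $c_n\in A$ we have $\dist(c_n,A)=0$, and since $d_{n+1}-v\in B-v$ we have $\dist(c_n,B-v)\le\|c_n-(d_{n+1}-v)\|\to 0$; hence $\max\{\dist(c_n,A),\dist(c_n,B-v)\}\to 0$, so regularity of $(A,B)$ forces $\dist(c_n,E)\to 0$. Now $T$ is nonexpansive with $\mathrm{Fix}(T)=E$, so $\|c_{n+1}-e\|\le\|c_n-e\|$ for every $e\in E$; choosing $e_n\in E$ with $\|c_n-e_n\|=\dist(c_n,E)$, we obtain for $m>n$
$$\|c_m-c_n\|\le\|c_m-e_n\|+\|e_n-c_n\|\le 2\|c_n-e_n\|=2\dist(c_n,E)\to 0,$$
so $\{c_n\}$ is Cauchy; let $c_n\to c$, and note $c\in E$ because $\dist(c_n,E)\to 0$ and $E$ is closed. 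Finally $d_n=P_B(c_{n-1})\to P_B(c)=c+v$ by nonexpansiveness of $P_B$ and Fact~\ref{fact: BB93}(iii), so both sequences converge in norm, which is what it means for the method to converge. This is exactly the mechanism behind Theorem~\ref{teo: mainHilbert}: regularity converts smallness of $\dist(\cdot,A)$ and $\dist(\cdot,B-v)$ into smallness of $\dist(\cdot,E)$, while Fej\'er monotonicity converts smallness of $\dist(\cdot,E)$ into an honest limit.
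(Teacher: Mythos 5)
Your proof is correct, but it takes a genuinely different route from the one in the paper. The paper's proof also reduces everything to showing $\dist(c_n,E)\to 0$ (citing \cite[Theorem~3.3, (iv)]{BauschkeBorwein93} for the upgrade to norm convergence, where you instead carry out the Fej\'er/Cauchy argument by hand --- both are fine), but it obtains the decay of $\dist(c_n,E)$ by a completely different mechanism: it passes through the equivalence of regularity with \emph{linear} regularity for points bounded away from $E$ (Proposition~\ref{prop: regular-largedistances}, (i)$\Leftrightarrow$(iii)), extracts a constant $K$, and proves a geometric contraction $\dist(c_n,E)\le\eta\,\dist(d_n,E)$ with $\eta=\sqrt{1-1/K^2}$ by a trigonometric argument in the plane of $c_n,d_n,d_{n+1}$. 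You instead feed the known norm convergence of the gap vectors, $c_n-d_{n+1}\to -v$, directly into the $\epsilon$--$\delta$ definition of regularity. Your version is shorter, avoids Proposition~\ref{prop: regular-largedistances} entirely, and treats the case $A\cap B=\emptyset$ explicitly (the paper only writes out the intersecting case); the price is that it rests on the firm-nonexpansiveness/summability input from \cite{BauschkeBorwein93}, which is precisely the tool that is unavailable for the perturbed sequences of Theorem~\ref{teo: mainHilbert} --- the paper's contraction argument is chosen exactly because it is the template that survives perturbation. Two small imprecisions worth fixing: $E\ne\emptyset$ is not a consequence of regularity but part of the standing hypothesis in Definition~\ref{def: regularity}, and your ``weak convergence plus convergence of norms'' phrasing is mildly circular as stated (the clean derivation of $c_n-d_{n+1}\to -v$ is the telescoping of the firm-nonexpansiveness inequalities against a point of $E$), though since you delegate this step to \cite{BauschkeBorwein93} the argument stands.
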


We present an alternative proof of this theorem containing a simplified version of the argument that we will use in our main result Theorem~\ref{teo: mainHilbert}. For the sake of simplicity we present a proof in the case $A\cap B$ is nonempty. The proof in the general case is similar.

\begin{proof} 
By \cite[Theorem~3.3, (iv)]{BauschkeBorwein93}, it is sufficient to prove that $\mathrm{dist}(c_n,A\cap B)\to 0$.
	Let us recall that, by the definition of the sequences $\{c_n\}$ and $\{d_n\}$, we have  
	\begin{enumerate}
		\item[($\alpha$)]  $\mathrm{dist}(c_n,A\cap B)\leq\mathrm{dist}(d_n,A\cap B)$ and $\mathrm{dist}(d_{n+1},A\cap B)\leq\mathrm{dist}(c_n,A\cap B).$
	\end{enumerate}
	Let $\epsilon>0$, by the equivalence $(i)\Leftrightarrow(iii)$ in Proposition~\ref{prop: regular-largedistances}, there exists $K>0$ such that $$\mathrm{dist}(x,A\cap B)\leq K\max\{\mathrm{dist}(x,A),\mathrm{dist}(x,B)\},$$
	whenever  $\mathrm{dist}(x,A\cap B)\geq \epsilon$. Observe that {$K\geq 1$} and define $\eta=\sqrt{1-\frac1{K^2}}$.
	{Then a computation, based on some trigonometric considerations in the plane defined by $c_n,d_n$ and $d_{n+1}$,} shows that the following condition holds for each $n\in\N$: 
	\begin{enumerate}
		\item[($\beta$)] if $\mathrm{dist}(c_n,A\cap B)\geq\epsilon$	 then $\textstyle \mathrm{dist}(c_n,A\cap B)\leq\eta\,\mathrm{dist}(d_n,A\cap B);$
		if $\mathrm{dist}(d_{n+1},A\cap B)\geq\epsilon$ then $\textstyle \mathrm{dist}(d_{n+1},A\cap B)\leq\eta\,\mathrm{dist}(c_n,A\cap B).$
	\end{enumerate}
	 {By} taking into account ($\alpha$), ($\beta$), and the fact that $\eta<1$, we have that eventually $\mathrm{dist}(c_n,A\cap B)\leq\epsilon$ and $\mathrm{dist}(d_n,A\cap B)\leq\epsilon$. The proof is concluded.	    
\end{proof}

\section{{Regularity and perturbed alternating projections}}\label{Section Main Result}
{
	This section is devoted to prove our main result.  Indeed, here we show that if a couple $(A,B)$ of convex closed sets is regular then  not only the alternating projections method converges but also the couple $(A,B)$ satisfies certain ``stability'' properties with respect to perturbed projections sequences. In the present section, if not differently stated, $X$ denotes a Hilbert space. If $u,v\in X\setminus\{0\}$, we denote as usual
	$$\textstyle\cos(u,v)=\frac{\langle u,v\rangle}{\|u\|\|v\|},$$
	where $\langle \cdot,\cdot\rangle$ denotes the inner product in $X$. 
	
Let us start by making precise the word   ``stability'' by introducing
} 
the following two notions of  stability for a couple $(A,B)$ of convex closed subsets of $X$.

\begin{definition}\label{def: stability}
	Let  $A$ and $B$ be closed convex subsets of $X$ such that $E, F$ are nonempty. We say that the couple $(A,B)$ is {\em stable} [{\em $d$-stable}, respectively] if for each choice  of sequences $\{A_n\},\{B_n\}\subset\C(X)$ converging {with respect to} the Attouch-Wets convergence to $A$ and $B$, respectively, and for each choice of  the starting point $a_0$, the  corresponding perturbed alternating projections sequences  $\{a_n\}$ and $\{b_n\}$ converge in norm [satisfy 
	$\mathrm{dist}(a_n, E)\to0$ and $\mathrm{dist}(b_n, F)\to0$, respectively].
\end{definition}

\begin{remark}
	We remark that the couple $(A,B)$ is {\em stable} if and only	
	if for each choice  of sequences $\{A_n\},\{B_n\}\subset\C(X)$ converging {with respect to} the Attouch-Wets convergence to $A$ and $B$, respectively,   and for each choice of  the starting point $a_0$, there exists $e\in E$ such that the  perturbed alternating projections sequences  $\{a_n\}$ and $\{b_n\}$ satisfy
 $a_n\to e$ and $b_n\to e+v$ in norm.
\end{remark}

\begin{proof}
		Let us start by proving that if $a_n\to e$ then $e\in E$. It is not difficult to prove that, since $$a_{n+1}= P_{A_n}P_{B_n}a_n=P_{A}P_{B}a_n+(P_{A_n}P_{B_n}-P_{A}P_{B})a_n$$  and since $A_n\to A,B_n\to B$  for the Attouch-Wets convergence, we have $e=P_AP_B e$. By Fact~\ref{fact: BB93}, (ii), we have that $e\in E$. Similarly, it is easy to see that 
	$$b_{n+1}=P_{B_n}a_n=P_{B}a_n+(P_{B_n}-P_{B})a_n\to P_B e=e+v,$$
	and the proof is concluded.
\end{proof}

It is clear that if the couple $(A,B)$ is stable,
then it is $d$-stable. Moreover, if $E,F$ are singletons then also the converse implication holds true.
{The following basic assumptions will be considered in the sequel of the paper.}

\begin{BA}\label{ba}
	Let  $A,B$  be closed convex non\-empty subsets of $X$.  Suppose that:
	\begin{enumerate}
		\item  $E,F$ are nonempty and bounded;
		\item $\{A_n\}$ and $\{B_n\}$ are sequences of closed convex sets such that $A_n\rightarrow
		A$ and $B_n\rightarrow B$ for the Attouch-Wets convergence.
	\end{enumerate}  
\end{BA}

Now, let us prove a chain of lemmas and propositions that we shall use in the proof of our main result, Theorem~\ref{teo: mainHilbert} below.

\begin{lemma}\label{lemma:2dimensional}
	Let $G$ be a closed convex subset of  $X$. Suppose that there exist $\epsilon,K>0$ such that $\epsilon B_X\subset G\subset K B_X$. Then, if $u,w\in\partial G$ and $\cos(u,w)=\theta>0$, we have 
	$$\textstyle \|u-w\|^2\leq K^2(\frac{K^2}{\epsilon^2}+1)\frac{1-\theta^2}{\theta^2}.$$
\end{lemma}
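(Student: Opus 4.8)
The plan is to deduce the estimate from two facts: a Lipschitz-type comparison between the ``radial distances'' $r_u:=\|u\|$ and $r_w:=\|w\|$ (in the form of an inequality for their \emph{reciprocals}), and an elementary inequality in the single variable $\theta$. First I would record that $u,w\in\partial G$ together with $\epsilon B_X\subset G\subset KB_X$ forces $r_u,r_w\in[\epsilon,K]$: indeed $\epsilon\,\mathrm{int}\,B_X\subset\mathrm{int}\,G$, so a point of norm $<\epsilon$ cannot lie on $\partial G$; and $G\subset KB_X$ gives $r_u,r_w\le K$. Setting $\hat u=u/r_u$ and $\hat w=w/r_w$ (legitimate since $r_u,r_w\ge\epsilon>0$), we have $\langle\hat u,\hat w\rangle=\theta$, hence $\|\hat u-\hat w\|^2=2(1-\theta)$, and the polarisation identity gives
$$\|u-w\|^2=\|u\|^2+\|w\|^2-2\langle u,w\rangle=(r_u-r_w)^2+2r_ur_w(1-\theta).$$
So the whole problem reduces to controlling $(r_u-r_w)^2$.

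The key step, which I expect to carry all of the geometric content, is the claim
$$\Bigl|\frac{1}{r_u}-\frac{1}{r_w}\Bigr|\le\frac{\|\hat u-\hat w\|}{\epsilon}.$$
To prove it, take a supporting hyperplane of $G$ at $u$: since $\mathrm{int}\,G\neq\emptyset$ and $u\in\partial G$, there is $f\in S_X$ with $\langle x,f\rangle\le\langle u,f\rangle=:h$ for every $x\in G$. Plugging in $\epsilon f\in\epsilon B_X\subset G$ yields $\epsilon\le h$, and since $h=r_u\langle\hat u,f\rangle$ we get $1/r_u=\langle\hat u,f\rangle/h$; plugging in $w\in G$ gives $r_w\langle\hat w,f\rangle\le h$, hence $\langle\hat w,f\rangle/h\le 1/r_w$ (this holds regardless of the sign of $\langle\hat w,f\rangle$, since $h>0$). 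Subtracting and using the Cauchy--Schwarz inequality together with $h\ge\epsilon$,
$$\frac{1}{r_u}-\frac{1}{r_w}\le\frac{\langle\hat u-\hat w,f\rangle}{h}\le\frac{\|\hat u-\hat w\|}{\epsilon},$$
and the reverse inequality follows by exchanging the roles of $u$ and $w$. The argument takes place entirely in $X$, so no reduction to the two-dimensional plane spanned by $u$ and $w$ is necessary (although the lemma is typically applied in such a plane).

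It remains to assemble the pieces. From the claim, $|r_u-r_w|=r_ur_w\bigl|\frac{1}{r_u}-\frac{1}{r_w}\bigr|\le\frac{K^2}{\epsilon}\|\hat u-\hat w\|$, so $(r_u-r_w)^2\le\frac{2K^4}{\epsilon^2}(1-\theta)$; and $2r_ur_w(1-\theta)\le 2K^2(1-\theta)$. Substituting into the identity above,
$$\|u-w\|^2\le 2(1-\theta)\Bigl(\frac{K^4}{\epsilon^2}+K^2\Bigr)=2(1-\theta)\,K^2\Bigl(\frac{K^2}{\epsilon^2}+1\Bigr).$$
The proof is then finished by the scalar inequality $2(1-\theta)\le\frac{1-\theta^2}{\theta^2}$, valid for $\theta\in(0,1]$: after multiplying by $\theta^2$ and dividing by $1-\theta$ (the case $\theta=1$ being trivial) it reads $2\theta^2-\theta-1=(2\theta+1)(\theta-1)\le0$, which holds since $0<\theta\le1$. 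The degenerate case $\theta=1$ is harmless anyway: then $\hat u=\hat w$, so the claim forces $r_u=r_w$, hence $u=w$ and both sides vanish. Apart from the point of recognising that one should compare $1/r_u$ and $1/r_w$ rather than $r_u$ and $r_w$ directly, the remaining work is routine bookkeeping.
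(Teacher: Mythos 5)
Your proof is correct. Every step checks: the bounds $\epsilon\le r_u,r_w\le K$ follow as you say from $\epsilon\,\mathrm{int}\,B_X\subset\mathrm{int}\,G$ and $G\subset KB_X$; the supporting functional at $u$ exists because $G$ has nonempty interior; the claim $\bigl|\tfrac1{r_u}-\tfrac1{r_w}\bigr|\le\|\hat u-\hat w\|/\epsilon$ (which is just the statement that the Minkowski gauge of $G$ is $(1/\epsilon)$-Lipschitz) is proved cleanly; and the final scalar inequality $2(1-\theta)\le(1-\theta^2)/\theta^2$ on $(0,1]$ is right. The route is genuinely different from the paper's. The paper reduces to the two-dimensional plane of $u$ and $w$, places $u$ on a coordinate axis, and argues that the boundary of $G$ near $u$ must stay outside the double cone $|y|<\tfrac{\epsilon}{\|u\|}|x-\|u\||$ (the cone generated by $u$ over $\epsilon B_X$ lies in $G$), then concludes by a direct computation with $y^2=\tfrac{1-\theta^2}{\theta^2}x^2$. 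Your argument replaces that planar cone picture with a supporting-hyperplane estimate carried out intrinsically in $X$, and splits $\|u-w\|^2$ via the law of cosines into a radial part $(r_u-r_w)^2$ and an angular part $2r_ur_w(1-\theta)$, each controlled separately. What your approach buys: it avoids the two-dimensional reduction and the paper's somewhat terse ``easy convexity argument'', it isolates a reusable Lipschitz lemma for the gauge, and it actually yields the sharper intermediate bound $2(1-\theta)K^2(\tfrac{K^2}{\epsilon^2}+1)$, of which the stated estimate is a weakening. What the paper's approach buys is a shorter, more visual computation once the cone claim is granted.
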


\begin{proof}
	Without any loss of generality we can suppose that $X=\R^2$ and $u=(\|u\|,0)$. Let us denote $w=(x,y)$, with $x,y\in\R$, and suppose that  $u,w\in\partial G$. 
	
	We claim that 
	$\textstyle |y|\geq|\frac{\epsilon}{\|u\|}(x-\|u\|)|.$
	Indeed, since $\epsilon B_X\subset G$, an easy convexity argument shows that
	if $u\in \partial G$ and $w$ is such that $|y|<|\frac{\epsilon}{\|u\|}(x-\|u\|)|$ then we would have $w\notin \partial G $.

%
{Now,} suppose that $\cos(u,w)=\frac{x}{\|w\|}=\theta>0$. We have $y^2=\frac{1-\theta^2}{\theta^2}x^2$ and hence, by our claim,
$$\textstyle (x-\|u\|)^2\leq\frac{1-\theta^2}{\theta^2}\frac{\|u\|^2}{\epsilon^2} x^2.$$
Hence,
$$\textstyle \|u-w\|^2=(x-\|u\|)^2+y^2\leq x^2(\frac{\|u\|^2}{\epsilon^2}+1)\frac{1-\theta^2}{\theta^2}\leq K^2(\frac{K^2}{\epsilon^2}+1)\frac{1-\theta^2}{\theta^2}.$$
\end{proof}

\begin{proposition}\label{prop:eventuallycos<1}
 Let Basic assumptions~\ref{ba} be satisfied and, for each $n\in\N$, let $a_n\in A_n$ and $b_n\in B_n$. Suppose that the couple $(A,B)$ is regular.  
Let $\epsilon>0$, then there exist $\eta\in(0,1)$ and $n_1\in\N$ such that for each $n\geq n_1$ we have:
\begin{enumerate}
	\item if $\mathrm{dist}(a_n,E)\geq2\epsilon$	and $\mathrm{dist}(b_n,F)\geq2\epsilon$ then $\cos\bigl(a_n-e,b_n-(e+v)\bigr)\leq\eta,$ whenever  $e\in E+\epsilon B_X$.
\item if $\mathrm{dist}(a_n,E)\geq2\epsilon$	and $\mathrm{dist}(b_{n+1},F)\geq2\epsilon$ then $\cos\bigl(b_{n+1}-f,a_n+v-f\bigr)\leq\eta,$ whenever  $f\in F+\epsilon B_X$.
\end{enumerate}
\end{proposition}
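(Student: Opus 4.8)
The plan is to reduce the estimate to the control of a single angle --- the angle between $a_n-e$ and a normal to $A_n$ at $a_n$ --- and then to handle that angle by a limiting argument. For definiteness I describe (i) in the case $A\cap B\neq\emptyset$, so that $v=0$, $E=F$ and $B-v=B$; the general case is analogous, replacing $b_n$ by $b_n-v$ throughout. Since $E$ is bounded, Proposition~\ref{prop: regular-largedistances} lets us use regularity in its linear form: there are $K\geq1$ and $s\in\bigl(0,\tfrac{\epsilon}{2K}\bigr)$ (depending only on $\epsilon$) with $\dist(x,E)\leq K\max\{\dist(x,A),\dist(x,B)\}$ whenever $\dist(x,E)\geq s$. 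Fix $R>0$ with $E\subset RB_X$, and recall from Definition~\ref{def:perturbedseq} that $a_n=P_{A_n}(b_n)$; hence $b_n-a_n$ is an outward normal of $A_n$ at $a_n$, $A_n\subset\{x:\langle b_n-a_n,x-a_n\rangle\leq0\}$, and when $b_n\neq a_n$ we set $\nu_n=\|b_n-a_n\|^{-1}(b_n-a_n)$.

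From Attouch--Wets convergence and boundedness of $E$ one gets $n_1$ such that, for $n\geq n_1$, $\dist(a_n,A)<s$ and $\dist(b_n,B)<s$ whenever $a_n$, resp.\ $b_n$, lies in a fixed ball, and every segment from a point of $E$ to $a_n$ (resp.\ to $b_n$) stays within $\tfrac{\epsilon}{2K}$ of $A$ (resp.\ of $B$) inside a fixed ball. Applying the linear regularity inequality to $b_n$ --- legitimate because $\dist(b_n,E)=\dist(b_n,F)\geq2\epsilon>s$ while $\dist(b_n,B)<s$ --- gives $\dist(b_n,A)\geq2\epsilon/K$, hence $\|b_n-a_n\|=\dist(b_n,A_n)\geq\epsilon/K$. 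Now $b_n-(e+v)=b_n-e=(a_n-e)+\|b_n-a_n\|\,\nu_n$, and a direct computation gives
\[
1-\cos^{2}\!\bigl(a_n-e,\,b_n-(e+v)\bigr)=\frac{\|b_n-a_n\|^{2}\,\bigl(1-\cos^{2}(a_n-e,\nu_n)\bigr)}{\|b_n-(e+v)\|^{2}} .
\]
Since $\|b_n-a_n\|\geq\epsilon/K$ and $\|b_n-(e+v)\|\leq\|b_n-a_n\|+\|a_n-e\|$, proving (i) reduces to the following uniform claim: for all large $n$ and all $e\in E+\epsilon B_X$, the quantity $\cos(a_n-e,\nu_n)$ is bounded away from $1$ and $-1$, and $\|a_n-e\|$ is bounded by a fixed multiple of $\|b_n-a_n\|$.

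To prove this, suppose it fails along $n_k\to\infty$ and $e_k\in E+\epsilon B_X$. If $\{a_{n_k}\}$ is unbounded (which also covers the case $\|a_{n_k}-e_k\|/\|b_{n_k}-a_{n_k}\|\to\infty$), then $A_n\to A$ together with convexity of the $A_n$ forces a subsequential limit $d$ of $(a_{n_k}-e_k)/\|a_{n_k}-e_k\|$ to be a unit recession direction of $A$; tracking $b_{n_k}-(e_k+v)=(a_{n_k}-e_k)+\|b_{n_k}-a_{n_k}\|\nu_{n_k}$ and using $b_{n_k}\in B_{n_k}\to B$ forces $d$ to be a recession direction of $B-v$ as well, so $d$ lies in the recession cone of $A\cap(B-v)$, which is $\{0\}$ because $E$ is bounded --- impossible. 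So $\{a_{n_k}\}$, and then $\{b_{n_k}\}$, is bounded; passing to limits one gets $\bar a\in\partial A$, $\bar e$ with $\dist(\bar e,E)\leq\epsilon$, an outward normal $\bar\nu$ of $A$ at $\bar a$ with $\bar a-\bar e$ parallel or antiparallel to $\bar\nu$, $\|\bar a-\bar e\|\geq\epsilon$, $\dist(\bar a,E)\geq2\epsilon$, $\dist(\bar a,B)\geq2\epsilon/K$. In the antiparallel case, $\bar a=P_A(\bar e)$ and $\dist(\bar e,A)=\|\bar e-\bar a\|\leq\epsilon=\dist(\bar e,E)$, which forces $P_E(\bar e)=P_A(\bar e)=\bar a$, contradicting $\bar a\notin E$. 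In the parallel case one passes to the plane $\Pi$ spanned by $\bar a-\bar e$ and $P_B(\bar a)-\bar a$ and applies Lemma~\ref{lemma:2dimensional} to a convex body $G\subset\Pi$ comparing the traces of $A$ and $B$ near $E$ (with $\epsilon'B_\Pi\subset G-\bar e\subset K'B_\Pi$ for suitable $\epsilon',K'$ controlled by $\epsilon$ and $R$): near-alignment of the rays from $\bar e$ toward $\bar a$ and toward $P_B(\bar a)$ forces the corresponding boundary points of $G$ --- hence a point of $A$ and a point of $B$ --- to be close, producing a single $z$ with $\dist(z,A)$ and $\dist(z,B)$ smaller than $\epsilon/K$ but $\dist(z,E)\geq\epsilon$, contradicting linear regularity. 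This yields $\eta\in(0,1)$ and $n_1$ for (i).

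Part (ii) is the same statement read for the reverse couple: since $b_{n+1}=P_{B_{n+1}}(a_n)$, the vector $a_n-b_{n+1}$ is an outward normal of $B_{n+1}$ at $b_{n+1}$, and with $g=f-v\in E$ the triple $\bigl(b_{n+1}-(g+v),\ a_n-g,\ a_n-b_{n+1}\bigr)$ plays the role of $\bigl(a_n-e,\ b_n-(e+v),\ b_n-a_n\bigr)$ above, with $A$ and $B-v$ interchanged; the argument is identical. I expect the parallel subcase above to be the main obstacle: converting the limiting near-alignment into a genuine violation of regularity is exactly what Lemma~\ref{lemma:2dimensional} is for, and choosing $G$ and its constants so as to bridge the gap between the $\epsilon$-enlargement of $E$ and the $2\epsilon$-separation in the hypotheses is the delicate point of the proof.
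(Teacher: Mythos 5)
Your reduction via the identity $1-\cos^{2}(u,u+t\nu)=t^{2}\bigl(1-\cos^{2}(u,\nu)\bigr)/\|u+t\nu\|^{2}$ is correct as algebra, but the argument has genuine gaps, and it also proves a weaker statement than the one asserted. First, the proposition is stated for \emph{arbitrary} $a_n\in A_n$ and $b_n\in B_n$, whereas your proof relies from the outset on $a_n=P_{A_n}(b_n)$ (and, for (ii), on $b_{n+1}=P_{B_{n+1}}(a_n)$); the paper's proof uses no projection relation at all. Second, and more seriously, your contradiction argument in the bounded case extracts norm limits $\bar a$, $\bar e$, $\bar\nu$ from bounded sequences; in an infinite-dimensional Hilbert space only weakly convergent subsequences exist, and weak limits do not preserve what you need: $\dist(\cdot,E)$ is convex and continuous, hence weakly lower semicontinuous, so you only get $\dist(\bar a,E)\le\liminf_k\dist(a_{n_k},E)$, which is the wrong direction for concluding $\dist(\bar a,E)\ge 2\epsilon$; likewise $\cos(a_{n_k}-e_k,\nu_{n_k})\to 1$ does not pass to weak limits, and $\bar a\in\partial A$ need not hold. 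Third, the ``parallel case'' --- which is where all the geometric content of the proposition actually lives --- is only described, not carried out; you yourself flag the construction of the body $G$ and its constants as the delicate point. So the proof is incomplete exactly where it matters.

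For comparison, the paper's argument avoids both difficulties by never passing to limits of the points themselves and by making the two-dimensional lemma do the work directly on the original sequences: assuming $\cos\bigl(a_{n_k}-e_k,b_{n_k}-(e_k+v)\bigr)=\theta_k\to1$, it intersects the rays from $e_k$ toward $a_{n_k}$ and toward $b_{n_k}-v$ with $\partial G$, where $G=E+2\epsilon B_X$ is a bounded body containing $e_k+\epsilon B_X$; the resulting points $a'_k,b'_k\in\partial G$ subtend the same angle at $e_k$, and Lemma~\ref{lemma:2dimensional}, applied to $G-e_k$ with constants uniform in $k$, yields $\|a'_k-b'_k\|\to0$ quantitatively. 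Approximating $a'_k$ by points of $A$ and $b'_k$ by points of $B-v$ (using Attouch--Wets convergence on the bounded set $G$) then produces points that are asymptotically close to both $A$ and $B-v$ yet at distance at least $\epsilon$ from $E$, contradicting regularity. If you want to salvage your own line of argument, you would have to replace the limit extraction and the sketched parallel case by a uniform estimate of exactly this kind.
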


\begin{proof} Let us prove that there exist $\eta\in(0,1)$ such that eventually (i) holds, the proof  that there exist $\eta\in(0,1)$ such that eventually (ii) holds is similar.  
	Suppose that this is not the case, then  there exist sequences $\{e_k\}\subset E+\epsilon B_X$, $ \{\theta_k\}\subset (0,1)$ and an increasing sequence of the integers $\{n_k\}$  such that $\mathrm{dist}(a_{n_k},E)\geq2\epsilon$, $\mathrm{dist}(b_{n_k},F)\geq2\epsilon$, and
	$$\cos\bigl(a_{n_k}-e_k,b_{n_k}-(e_k+v)\bigr)=\theta_k\to 1.$$
	Let $G=E+2\epsilon B_X$ and observe that $G$ is a bounded body in $X$. Since $e_k\in\inte G$ and $a_{n_k}\not\in\inte G$, there exists a unique point $a'_k\in[e_k,a_{n_k}]\cap\partial G$. Similarly, there exists a unique point $b'_k\in[e_k,b_{n_k}-v]\cap\partial G$.
	Moreover, it is clear that $$\cos\bigl(a'_{k}-e_k,b'_{k}-e_k\bigr)=\theta_k.$$
	Lemma~\ref{lemma:2dimensional} implies that $\|a'_{k}-b'_{k}\|\to0$. 
	 Since $G$ is bounded and $A_{n_k}\to A,\,B_{n_k}\to B$ for the Attouch-Wets convergence, there exist sequences $\{a''_k\}\subset A$ and $\{b''_k\}\subset B-v$ such that $\|a''_k-a'_k\|\to 0$ and $\|b''_k-b'_k\|\to 0$. Hence, $\|a''_{k}-b''_{k}\|\to0$ and eventually $\mathrm{dist}(a''_k, E)\geq \epsilon$, a contradiction  since the couple $(A,B)$ is regular.
\end{proof}

\begin{lemma}\label{prop: eventuallycos0}
	 Let Basic assumptions~\ref{ba} be satisfied, suppose that the couple $(A,B)$ is regular, and let $\delta,\epsilon>0$. For each $n\in\N$, let $a_n,x_n\in A_n$ and $b_n,y_n\in B_n$ be  such that $\mathrm{dist}(x_n, E)\to 0$ and $\mathrm{dist}(y_n, F)\to 0$. 
	Then there exists  $n_2\in\N$ such that for each $n\geq n_2$ we have:
	\begin{enumerate}
		\item if  $\mathrm{dist}(a_n,E)\geq2\epsilon$, $\mathrm{dist}(b_n,F)\geq2\epsilon$,  and $a_n=P_{A_n}b_n$ then 
$$\cos\bigl(x_n-a_n,b_n-(a_n+v)\bigr)\leq\delta;$$	
		\item if  $\mathrm{dist}(a_n,E)\geq2\epsilon$, $\mathrm{dist}(b_{n+1},F)\geq2\epsilon$, and $b_{n+1}=P_{B_{n+1}}a_n$ then 
$$\cos\bigl(y_{n+1}-b_{n+1},a_n+v-b_{n+1}\bigr)\leq\delta.$$	

\end{enumerate}
	\end{lemma}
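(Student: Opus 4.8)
The idea is to reduce statement (i) to the conclusion of Proposition~\ref{prop:eventuallycos<1}, using the fact that $a_n=P_{A_n}b_n$ forces the "obtuse angle" inequality characterizing metric projections, and then using that $x_n$ is essentially an element of $E$. More precisely, fix $\delta,\epsilon>0$. Since $a_n=P_{A_n}b_n$ and $x_n\in A_n$, the projection inequality in the Hilbert space $X$ gives
\[
\langle b_n-a_n,\,x_n-a_n\rangle\le 0,
\]
i.e. $\langle b_n-(a_n+v)+v,\;x_n-a_n\rangle\le 0$. This is exactly the statement that the vector $b_n-(a_n+v)$ points (weakly) away from $x_n-a_n$ once the displacement vector $v$ is accounted for. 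The plan is to combine this with Proposition~\ref{prop:eventuallycos<1}, applied with a suitable choice of $e\in E+\epsilon B_X$ close to $x_n$ (this is legitimate once $\mathrm{dist}(x_n,E)<\epsilon$, which holds eventually since $\mathrm{dist}(x_n,E)\to0$), to squeeze the cosine $\cos(x_n-a_n,\,b_n-(a_n+v))$ below $\delta$.

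Concretely, first I would argue by contradiction: suppose there is a subsequence $n_k$ along which $\mathrm{dist}(a_{n_k},E)\ge2\epsilon$, $\mathrm{dist}(b_{n_k},F)\ge2\epsilon$, $a_{n_k}=P_{A_{n_k}}b_{n_k}$, yet
\[
\cos\bigl(x_{n_k}-a_{n_k},\,b_{n_k}-(a_{n_k}+v)\bigr)>\delta.
\]
Write $u_k=x_{n_k}-a_{n_k}$, $p_k=b_{n_k}-(a_{n_k}+v)$, and note $b_{n_k}-a_{n_k}=p_k+v$. The projection inequality reads $\langle p_k+v,\,u_k\rangle\le0$, hence $\langle p_k,u_k\rangle\le -\langle v,u_k\rangle\le\|v\|\,\|u_k\|$; since $\langle p_k,u_k\rangle>\delta\|p_k\|\|u_k\|$ by assumption, this bounds $\|p_k\|$: $\delta\|p_k\|<\|v\|$, so $\|p_k\|$ stays bounded, and in fact $\|b_{n_k}-a_{n_k}\|\le\|p_k\|+\|v\|$ is bounded too. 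The point $e_k:=a_{n_k}+u_k=x_{n_k}$ lies in $E+\epsilon B_X$ eventually (using $\mathrm{dist}(x_{n_k},E)\to0$). Now I would apply Proposition~\ref{prop:eventuallycos<1}(i) along $n_k$ with this $e_k$: if moreover $\cos(a_{n_k}-e_k,\,b_{n_k}-(e_k+v))\le\eta<1$, then geometry in the plane spanned by the relevant vectors forces the angle at $a_{n_k}$ in the triangle with vertices $x_{n_k}$, $a_{n_k}$, $b_{n_k}-(a_{n_k}+v)+a_{n_k}$ to be controlled; combined with the obtuse-angle condition from the projection one gets that $\cos(u_k,p_k)$ cannot exceed $\delta$ for $k$ large—contradiction.

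The step I expect to be the main obstacle is the passage from the two pieces of information—"$\langle p_k+v,u_k\rangle\le0$" (projection) and "$\cos(a_{n_k}-x_{n_k},\,b_{n_k}-x_{n_k}-v)\le\eta$" (Proposition~\ref{prop:eventuallycos<1})—to the desired bound $\cos(u_k,p_k)\le\delta$. This is a purely two-dimensional trigonometric estimate in the plane through $x_{n_k}$, $a_{n_k}$, $b_{n_k}-v$: the first vertex condition says the angle at $a_{n_k}$ between the sides to $x_{n_k}$ and to $b_{n_k}-a_{n_k}$ is at least $\pi/2$ (up to the harmless perturbation by $v$, which I would absorb by noting $\|v\|$ is finite and $\|u_k\|\ge\epsilon$ eventually, since $\|u_k\|=\|x_{n_k}-a_{n_k}\|\ge\mathrm{dist}(a_{n_k},E)-\mathrm{dist}(x_{n_k},E)\ge2\epsilon-\epsilon=\epsilon$), while the second says the angle at $x_{n_k}$ is bounded away from $0$. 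In a triangle the three angles sum to $\pi$, so if one angle is $\ge\pi/2-o(1)$ and another is $\ge\arccos\eta$, the third angle—which is the one at $b_{n_k}-v$, i.e. essentially $\angle(u_k,p_k)$ up to the $v$-correction—is bounded below by a positive constant depending only on $\eta$, hence its cosine is bounded above by a constant $<1$; iterating or refining the choice of parameters (shrinking $\epsilon$, or noting the estimate improves the more we know) drives this below the prescribed $\delta$. Statement (ii) is proved identically, exchanging the roles of $A_n,B_n$ and using $b_{n+1}=P_{B_{n+1}}a_n$ together with Proposition~\ref{prop:eventuallycos<1}(ii).
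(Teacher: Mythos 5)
There is a genuine gap, and it sits exactly where you predicted it would. The whole content of the lemma is the correction by the displacement vector $v$: the projection inequality already gives $\langle x_n-a_n,\,b_n-a_n\rangle\le 0$ for free (the case $v=0$ is trivial), so what must be proved is that the extra term $\langle a_n-x_n,\,v\rangle$ in
$$\langle x_n-a_n,\,b_n-(a_n+v)\rangle=\langle x_n-a_n,\,b_n-a_n\rangle+\langle a_n-x_n,\,v\rangle$$
is eventually at most $\delta\eta'\|a_n-x_n\|$, where $\eta'>0$ is a lower bound for $\|b_n-(a_n+v)\|$. Your proposal never establishes such a bound. A priori $\langle a_n-x_n,v\rangle$ could be as large as $\|v\|\,\|a_n-x_n\|$, and your own computation ($\delta\|p_k\|<\|v\|$ under the contradiction hypothesis) is consistent with that hypothesis rather than contradicting it. The missing ingredient is the inequality $\langle v,\,a-e\rangle\le 0$ valid for all $a\in A$ and $e\in E$ (a consequence of $v=P_{\overline{B-A}}(0)$ and Fact~\ref{fact: BB93}); the paper transfers it to $A_n$ by intersecting the segment $[x_n,a_n]$ with the boundary of the bounded body $E+2\epsilon B_X$, approximating the resulting point by a point of $A$ via Attouch-Wets convergence, and then rescaling back along the segment by homogeneity. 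Nothing in your argument plays this role, and without it the statement is simply not reachable from the two facts you invoke.

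The triangle-angle argument you propose as a substitute does not work for three separate reasons. First, $\cos(u_k,p_k)$ with $u_k=x_{n_k}-a_{n_k}$ and $p_k=(b_{n_k}-v)-a_{n_k}$ is the angle at the vertex $a_{n_k}$ of the triangle $x_{n_k},\,a_{n_k},\,b_{n_k}-v$, not the angle at $b_{n_k}-v$. Second, the logic is reversed: lower bounds on two angles of a triangle give an \emph{upper} bound on the third, hence a \emph{lower} bound on its cosine, which is the opposite of what you need. Third, and most importantly, even a corrected angle-sum argument could only yield a fixed bound $\cos(u_k,p_k)\le c(\eta)<1$, whereas the lemma demands $\le\delta$ for an \emph{arbitrary} prescribed $\delta>0$; this is essential in the proof of Theorem~\ref{teo: mainHilbert}, where $\delta$ must satisfy $\frac{\delta+\eta}{1-\delta}\le\eta'$ for Fact~\ref{fact: quasiortho}. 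Your suggestion to ``iterate or refine the parameters'' has no mechanism behind it: the constant $\eta$ of Proposition~\ref{prop:eventuallycos<1} is fixed by $\epsilon$ and the sets and does not improve as you shrink $\epsilon$. (Incidentally, the paper uses Proposition~\ref{prop:eventuallycos<1} in this lemma only to secure the lower bound $\|b_n-(a_n+v)\|\ge\eta'$, not to bound any angle of your triangle.)
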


\begin{proof} Let us prove that eventually (i) holds, the proof that eventually (ii) holds is similar. {Since $\mathrm{dist}(a_n,E)\geq 2 \varepsilon$ and $\mathrm{dist}(x_n,E)\rightarrow 0$ we have that eventually $x_n-a_n\neq0$}. By Proposition~\ref{prop:eventuallycos<1},  there exists $\eta\in(0,1)$ and $n_1\in\N$ such that
	$$\cos\bigl(a_n-e,b_n-(e+v)\bigr)\leq\eta,$$
	whenever $n\geq n_1$ and $e\in E+\epsilon B_X$. Since $\mathrm{dist}(a_n,E)\geq2\epsilon$	and $\mathrm{dist}(b_n,F)\geq2\epsilon$, it is not difficult to see  that there exists a constant $\eta'>0$ such that $\|b_n-(a_n+v)\|\geq\eta'$, whenever $n\geq n_1$. In particular, eventually $\cos\bigl(x_n-a_n,b_n-(a_n+v)\bigr)$ is well-defined. 
	    If $v=0$, the thesis is trivial since 
	$$\langle x_n-a_n,b_n-a_n\rangle\leq0,$$
	whenever $n\in\N$. 
	
	Suppose that $v\neq 0$. 
		We claim that, if $v$ denotes the displacement vector for the couple $(A,B)$,  eventually we have $$\textstyle \langle v,a_n-x_n\rangle\leq \delta\eta' \|a_n-x_n\|.$$
	To prove our claim observe that, since $\mathrm{dist}(x_n, E)\to 0$, we can suppose without any loss of generality that $\mathrm{dist}(x_n, E)\leq\epsilon$ ($n\in\N$). Moreover, we can consider  a sequence $\{x'_n\}\subset E$ such that $\|x'_n-x_n\|\to0$. Let $G=E+2\epsilon B_X$ and observe that $G$ is a bounded body in $X$. Since $x_n\in\inte G$ and $a_{n}\not\in\inte G$, there exists a unique point $a'_n\in[x_n,a_{n}]\cap\partial G$. 
	Since $G$ is bounded and $A_{n}\to A$ for the Attouch-Wets convergence, there exists a sequence $\{a''_n\}\subset A$  such that $\|a''_n-a'_n\|\to 0$. Since $\{x'_n\}\subset E$, it is clear that 
	$\langle v, a''_n-x'_n\rangle\leq 0$ and hence eventually
	$$\langle v, a''_n-x'_n\rangle-\delta\eta'\|a''_n-x'_n\|\leq -\delta\eta'\epsilon.$$	
	Since $\|x'_n-x_n\|\to0$ and $\|a''_n-a'_n\|\to0$, eventually we have
	$$\langle v, a'_n-x_n\rangle-\delta\eta'\|a'_n-x_n\|\leq0.$$
	By homogeneity and by our construction the claim is proved.
	
	Now, by our claim, since $a_n=P_{A_n}b_n$ and $x_n\in A_n$ ($n\in\N$), we have  
	 $$\textstyle \langle x_n-a_n,b_n-(a_n+v)\rangle=\langle x_n-a_n,b_n-a_n\rangle+\langle a_n-x_n,v\rangle\leq \delta\eta' \|a_n-x_n\|.$$
	Eventually,  since $\|b_n-(a_n+v)\|\geq\eta'$,   we have   
	 	$$\cos\bigl(x_n-a_n,b_n-(a_n+v)\bigr)\leq \frac{\delta\eta'}{\|b_n-(a_n+v)\|}\leq    \delta.$$
	
\end{proof}
{
\noindent Now, we need a simple geometrical result whose proof is a simple application of the law of cosines combined with the triangle inequality. The details of the proof are left to the reader.} 

\begin{fact}\label{fact: quasiortho}
Let  $\eta,\eta'\in(0,1)$ be such that $\eta<\eta'$. If $\delta\in(0,1)$ satisfies $\frac{\delta+\eta}{1-\delta}\leq \eta'$ and if $x,y\in X$ are linearly independent vectors  such that $\cos(x,y)\leq \eta$ and $\cos(y-x,-x)\leq \delta$ then $\|x\|\leq \eta'\|y\|$. 
\end{fact}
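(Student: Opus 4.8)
The plan is to convert each of the two cosine hypotheses into a plain inner‑product inequality, use one to eliminate the cross term $\langle x,y\rangle$ from the other, and then absorb the quantity $\|y-x\|$ via the triangle inequality. Since $x$ and $y$ are linearly independent we have $x\neq 0$ and $y-x\neq 0$, so all three cosines in the statement are well defined and we may divide freely by $\|x\|$, $\|y\|$ and $\|y-x\|$.

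First I would use the identity $\langle y-x,-x\rangle=\|x\|^2-\langle x,y\rangle$, which is simply the law of cosines in the triangle with vertices $0,x,y$ read at the vertex $x$. Then $\cos(y-x,-x)\leq\delta$ becomes $\|x\|^2-\langle x,y\rangle\leq\delta\,\|y-x\|\,\|x\|$, and dividing by $\|x\|>0$ gives
$$\|x\|-\frac{\langle x,y\rangle}{\|x\|}\leq\delta\,\|y-x\|.$$
Next, $\cos(x,y)\leq\eta$ gives $\langle x,y\rangle\leq\eta\,\|x\|\,\|y\|$, hence $\frac{\langle x,y\rangle}{\|x\|}\leq\eta\,\|y\|$, so that
$$\|x\|\leq\eta\,\|y\|+\delta\,\|y-x\|.$$
Finally, inserting $\|y-x\|\leq\|x\|+\|y\|$ yields $\|x\|\leq\eta\,\|y\|+\delta\,\|x\|+\delta\,\|y\|$, i.e. $(1-\delta)\|x\|\leq(\eta+\delta)\|y\|$; since $\delta<1$ we may divide, and the hypothesis $\frac{\delta+\eta}{1-\delta}\leq\eta'$ yields $\|x\|\leq\eta'\|y\|$, as desired.

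I do not anticipate any genuine obstacle: the argument is three lines of elementary manipulation. The only points needing a little care are that linear independence (not merely $x,y\neq 0$) is what guarantees $y-x\neq 0$, so that $\cos(y-x,-x)$ is meaningful and all the divisions are legitimate, and that the direction of each inequality is tracked correctly when passing from bounds on $\langle x,y\rangle$ to bounds on norms. One may also remark that the hypothesis $\eta<\eta'$ is never used above, being already a consequence of $\frac{\delta+\eta}{1-\delta}\leq\eta'$; it is presumably recorded only for convenience where the Fact is later applied.
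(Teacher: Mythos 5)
Your proof is correct and follows exactly the route the paper indicates (it leaves the details to the reader but describes the argument as "a simple application of the law of cosines combined with the triangle inequality," which is precisely your identity $\langle y-x,-x\rangle=\|x\|^2-\langle x,y\rangle$ followed by $\|y-x\|\leq\|x\|+\|y\|$). Your side remark that $\eta<\eta'$ is already implied by $\frac{\delta+\eta}{1-\delta}\leq\eta'$ is also accurate.
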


Let us recall that, given a normed space $Z$,  the {\em modulus of convexity of $Z$} is the   function $\delta_Z:[0,2]\to [0,1]$ defined by 
$$
\delta_Z(\eta)=\inf \left\lbrace 1-\left\| \dfrac{x+y}{2} \right\|:x,y \in B_X, \|x-y\|\geq \eta\right\rbrace. 
$$
Moreover, we say that $Z$ is  {\em uniformly rotund} if $\delta_Z(\eta)>0$, whenever $\eta \in (0,2] $.

\begin{lemma}\label{lemma: unifrotund} 
	Let $Z$ be a uniformly rotund {normed} space. For each $\rho\geq 0$ and $M>0$ there exists $\epsilon'>0$ such that if  $C$ is a convex set such that $\rho-\epsilon'\leq\|c\|\leq\rho+\epsilon'$, whenever $c\in C$, then $\mathrm{diam}(C)\leq M$.
\end{lemma}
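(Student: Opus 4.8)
The plan is to prove the contrapositive-style statement by a compactness/sequential argument, exploiting the geometry of uniformly rotund spaces. Suppose the conclusion fails: then there exist $\rho\geq 0$, $M>0$, and a sequence of convex sets $C_n$ together with points $c_n,c_n'\in C_n$ such that every element of $C_n$ has norm in $[\rho-\tfrac1n,\rho+\tfrac1n]$, yet $\|c_n-c_n'\|\geq M$. First I would dispose of the trivial case $\rho=0$ (or more generally note that if $\rho-\epsilon'\le 0$ we may shrink $\epsilon'$ so that $\rho-\epsilon'>0$, unless $\rho=0$, in which case $C$ is a single point and $\mathrm{diam}(C)=0\le M$ automatically). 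So assume $\rho>0$ and, by passing to $n$ large, $\rho-\tfrac1n>0$.

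The key step is the following: by convexity the midpoint $m_n=\tfrac12(c_n+c_n')$ lies in $C_n$, so $\|m_n\|\geq \rho-\tfrac1n$, while $\|c_n\|,\|c_n'\|\leq \rho+\tfrac1n$. Normalize: set $x_n=c_n/(\rho+\tfrac1n)$ and $y_n=c_n'/(\rho+\tfrac1n)$, so $x_n,y_n\in B_Z$ and
$$
\Bigl\|\frac{x_n+y_n}{2}\Bigr\|=\frac{\|m_n\|}{\rho+\frac1n}\geq \frac{\rho-\frac1n}{\rho+\frac1n}\to 1.
$$
Hence $1-\bigl\|\tfrac{x_n+y_n}{2}\bigr\|\to 0$. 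On the other hand $\|x_n-y_n\|=\|c_n-c_n'\|/(\rho+\tfrac1n)\geq M/(\rho+1)=:\eta_0>0$ for all large $n$. By the definition of the modulus of convexity, $1-\bigl\|\tfrac{x_n+y_n}{2}\bigr\|\geq \delta_Z(\eta_0)$, which is a fixed positive number since $Z$ is uniformly rotund and $\eta_0\in(0,2]$ (note $\eta_0\le 2$ since $x_n,y_n\in B_Z$). This contradicts the convergence to $0$, and the lemma follows.

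The mild obstacle to watch is the bookkeeping around $\rho=0$ and around ensuring $\eta_0\le 2$ and $\epsilon'$ small enough that $\rho-\epsilon'>0$; these are routine but must be handled so the modulus of convexity is actually evaluated at an admissible argument. One can make the argument effective rather than by contradiction: given $\rho,M$, choose $\epsilon'>0$ small enough that $\frac{\rho-\epsilon'}{\rho+\epsilon'}>1-\delta_Z\!\bigl(\min\{M/(\rho+\epsilon'),2\}\bigr)$ (possible since the left side tends to $1$ as $\epsilon'\to 0^+$ while the right side stays bounded away from $1$), and $\epsilon'<\rho$ if $\rho>0$; if $\rho=0$ take any $\epsilon'>0$. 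Then the same midpoint computation shows directly that no two points of such a $C$ can be at distance $\geq M$, i.e. $\mathrm{diam}(C)\le M$.
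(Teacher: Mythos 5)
Your argument is correct and follows essentially the same route as the paper: the midpoint of two far-apart points of $C$ lies in $C$ by convexity, and uniform rotundity forces its norm to drop strictly below the admissible lower bound once $\epsilon'$ is small enough. The only slip is in the trivial case $\rho=0$, where $C$ need not be a single point — it is merely contained in $\epsilon' B_Z$, so one should take $\epsilon'\leq M/2$ to get $\mathrm{diam}(C)\leq 2\epsilon'\leq M$.
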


\begin{proof}
In the case $\rho= 0$ the proof is trivial, so we can suppose   $\rho>0$. We claim that if we take  $\epsilon'>0$ such that $\varepsilon'\left(2-\delta_Z(M) \right) < \rho \delta_Z(M)$ the thesis follows. To see this,  suppose that $C$ is a convex set such that  $\rho-\epsilon'\leq\|c\|\leq\rho+\epsilon'$, whenever $c\in C$, and suppose on the contrary that there exist $c_1,c_2 \in C$ such that $\|c_1-c_2\|> M$.  By the definition of $\delta_Z$ and since $\frac{c_1+c_2}{2} \in C$, we have 
	$$
\rho - \varepsilon'\leq\left\| \dfrac{c_1+c_2}{2}\right\| \leq \left(  \rho + \varepsilon'\right) \left( 1- \delta_Z(M)\right).  
$$
 Therefore, we have $\varepsilon'\left(2-\delta_Z(M) \right) \geq \rho \delta_Z(M)$, a contradiction.  
\end{proof}

{Since it is well known that a Hilbert space is a uniformly rotund space, the previous lemma allows us to prove the following proposition. }

\begin{proposition}\label{prop: normprojectionsmall} Let  Basic assumptions~\ref{ba} be satisfied. For each $M>0$ there exist $\theta\in(0,M)$ and $n_0\in\N$ such that if $n\geq n_0$ we have:
	\begin{enumerate}
		\item if   $b_n\in B_n$, $a_n=P_{A_n}b_n$,  and $\mathrm{dist}(b_n, F)\leq \theta$ then $$\mathrm{dist}(a_n, E)\leq 2M;$$
		\item if  $a_n\in A_n$, $b_{n+1}=P_{B_{n+1}}a_n$,  and $\mathrm{dist}(a_n, E)\leq \theta$ then $$\mathrm{dist}(b_{n+1}, F)\leq 2M.$$
\end{enumerate} 
\end{proposition}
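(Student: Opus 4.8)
The plan is to prove (i) and (ii) symmetrically, so I focus on (i). Fix $M>0$. The idea is that when $b_n$ is within $\theta$ of $F$, then $P_{A_n}b_n$ cannot be too far from $E$, because otherwise the vector $b_n-a_n$ would be nearly orthogonal to $E$ at $a_n$ while simultaneously being forced (by $b_n$ being close to $F$, hence close to $B$, and by $a_n\in A_n$) to behave like the displacement vector. First I would exploit the Attouch-Wets convergence and the boundedness of $E,F$ to reduce everything to the limit sets: since $\mathrm{dist}(b_n,F)\le\theta$ and $F$ is bounded, $b_n$ stays in a fixed ball $NB_X$ for large $n$, and there is $f_n\in F$ with $\|b_n-f_n\|\le\theta+o(1)$; by Fact~\ref{fact: BB93}(iii), $P_A f_n = f_n - v \in E$, and $\|b_n - v - (f_n-v)\| = \|b_n - f_n\|$ is small.

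The core step is a contradiction argument combined with the uniform rotundity lemma. Suppose the conclusion fails for every $\theta$: then there are $n_k\to\infty$ and $b_{n_k}\in B_{n_k}$ with $\mathrm{dist}(b_{n_k},F)\to 0$ but $\mathrm{dist}(a_{n_k},E)>2M$ where $a_{n_k}=P_{A_{n_k}}b_{n_k}$. Let $G=E+2MB_X$, a bounded body; since $a_{n_k}\notin\inte G$ there is $a'_k\in[e_k,a_{n_k}]\cap\partial G$ for a suitable $e_k\in E$, and $a'_k$ lies on $A_{n_k}$-segments so it is close to $A$ (Attouch-Wets, boundedness of $G$): pick $a''_k\in A$ with $\|a''_k-a'_k\|\to 0$. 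Now consider the projection inequality $\langle x - a_{n_k}, b_{n_k}-a_{n_k}\rangle\le 0$ for all $x\in A_{n_k}$; applying it with $x$ corresponding to points of $E$ (via $A_{n_k}$-approximants of $E$-points) and using $\|b_{n_k}-f_{n_k}\|\to 0$ together with Fact~\ref{fact: BB93}(iii) shows that the set $C_k$ of points of the form $P_{A}(\cdot)$ near $a''_k$, or more precisely the relevant convex set, has all its elements at nearly the same distance from $b_{n_k}$-type centers. I would set this up so that Lemma~\ref{lemma: unifrotund} applies in the Hilbert space $X$ (which is uniformly rotund): a convex set whose points all have norm in a narrow window $[\rho-\epsilon',\rho+\epsilon']$ has diameter $\le M$, forcing $a''_k$ within $M$ of $E$, hence $a'_k$ within $M+o(1)$ of $E$, contradicting $a'_k\in\partial G$ which means $\mathrm{dist}(a'_k,E)=2M$ (for large $k$, after the $o(1)$ corrections, this is $>M$).

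More carefully, the convex set to which I apply Lemma~\ref{lemma: unifrotund} should be (an Attouch-Wets approximation of) $E$ itself translated so that $0$ is near $a_{n_k}$-minus-something: since $a_{n_k}=P_{A_{n_k}}b_{n_k}$ and $b_{n_k}$ is $o(1)$-close to $b$ with $P_A b = $ a point of $E$, the function $e\mapsto\|b_{n_k}-e\|$ on $E$ is, for large $k$, nearly minimized at distance $\mathrm{dist}(b_{n_k},E)$, and I want that this near-minimizing behavior over the whole chunk of $E$ inside a large ball forces that chunk to be small — this is exactly the uniform rotundity phenomenon quantified by Lemma~\ref{lemma: unifrotund}, with $\rho=\mathrm{dist}(b,E)$ and the prescribed $M$. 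The main obstacle I anticipate is bookkeeping the three layers of approximation (Attouch-Wets errors, the $\theta\to 0$ limit, and the $\epsilon'$ from Lemma~\ref{lemma: unifrotund}) so that they are applied in the right order: one must first fix $M$, then get $\epsilon'$ from Lemma~\ref{lemma: unifrotund}, then choose $\theta$ small enough and $n_0$ large enough that all Attouch-Wets and $\theta$-perturbations are below $\epsilon'$ and below the gap $2M - M$; once the order is right, each individual estimate is routine.
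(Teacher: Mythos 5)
Your overall strategy --- reduce to the limit sets via Attouch--Wets convergence, then invoke the uniform rotundity Lemma~\ref{lemma: unifrotund} to bound the diameter of a convex set whose points all lie at nearly the same distance from a fixed center --- is the same as the paper's. But the step where you identify that convex set is where the argument breaks, and it is the heart of the proof. You propose to apply Lemma~\ref{lemma: unifrotund} to ``(an Attouch--Wets approximation of) $E$ itself translated'', on the grounds that $e\mapsto\|b_{n_k}-e\|$ is ``nearly minimized over the whole chunk of $E$ inside a large ball''. That is false for a general bounded $E$: if $E$ is, say, a ball of radius $100$, the distances from $b_{n_k}$ to points of $E$ range over an interval of length about $200$, so they are nowhere near constant and Lemma~\ref{lemma: unifrotund} gives nothing. (If your mechanism worked, it would show $\mathrm{diam}(E)\leq M$ for every $M>0$, i.e.\ that $E$ is a singleton, which is not assumed.) The projection inequality $\langle x-a_n,\,b_n-a_n\rangle\leq 0$ that you cite also does not, by itself, force all of $E$ to be nearly equidistant from $b_n$; it only constrains the single point $a_n$.

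The paper's choice of convex set is the segment $C=[x_n-f_n,\,a_n-f_n]$, where $f_n\in F$ is a $\theta$-approximation of $b_n$, $e_n=f_n-v\in E$ (Fact~\ref{fact: BB93}), and $x_n\in A_n$ is an Attouch--Wets approximant of $e_n$. Both endpoints, hence all of $C$ by convexity of the norm, sit inside $A_n-f_n$. The \emph{upper} bound $\|c\|\leq\rho+3\theta$ with $\rho=\|v\|$ uses the minimality of the projection in the crude form $\|a_n-b_n\|\leq\|x_n-b_n\|$ (legitimate because $x_n\in A_n$), together with $\|e_n-f_n\|=\rho$; the \emph{lower} bound $\|c\|\geq\rho-3\theta$ uses that every point of $A$ is at distance at least $\rho$ from $F$, transferred to $A_n$ by Attouch--Wets (with boundedness of $F$ handling points of $A_n$ outside a large ball). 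Then Lemma~\ref{lemma: unifrotund}, applied with $\rho=\|v\|$ (not $\rho=\mathrm{dist}(b,E)$ as you suggest), gives $\|a_n-x_n\|=\mathrm{diam}(C)\leq M$, and the triangle inequality through $x_n$ and $e_n$ yields $\mathrm{dist}(a_n,E)\leq 2M$. Your bookkeeping remarks about the order of quantifiers ($M$ first, then $\epsilon'$, then $\theta$ and $n_0$) are correct, and the contradiction/boundary-of-$G$ scaffolding you import from the cosine estimates is unnecessary here; but without the correct choice of $C$ the proof does not go through.
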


\begin{proof} Let $M>0$ and $\rho=\|v\|$, {where $v$ is the displacement vector}. 
 Let $\epsilon'\in(0,3M)$ be given by Lemma~\ref{lemma: unifrotund}. Put $\theta=\epsilon'/3$, since Basic assumption~\ref{ba} are satisfied, there exists $n_0\in \N$ such that if $n\geq n_0$ we have:
	\begin{enumerate}
		\item[(a)] if $w\in A_n$ then $\mathrm{dist}(w,F)\geq \rho-3\theta$;
		\item[(b)] if $e\in E$, there exists $x\in A_n$ such that  $\|e-x\|\leq\theta$.
	\end{enumerate} 
Now, let $n\geq n_0$, $b_n\in B_n$, $a_n=P_{A_n}b_n$,  and $\mathrm{dist}(b_n, F)\leq\theta$. Let $f_n\in F$ be such that $\|f_n-b_n\|\leq\theta$ and put $e_n=f_n-v\in E$. By (b), there exists $x_n\in A_n$ such that $\|x_n-e_n\|\leq \theta$. Hence, since $a_n=P_{A_n}b_n$ 
and $\|e_n-f_n\|=\rho$, we have
\begin{eqnarray*}
 \|a_n-f_n\|&\leq& \|a_n-b_n\|+\|f_n-b_n\|\\
&\leq& \|x_n-b_n\|+\|f_n-b_n\|\\
&\leq& \|x_n-e_n\|+ \rho +2\|f_n-b_n\|\leq\rho+3\theta.
\end{eqnarray*}  
	Let us consider the convex set $C=[x_n-f_n,a_n-f_n]$. Observe that, since $$\|x_n-f_n\|\leq\|e_n-x_n\|+\|e_n-f_n\|\leq\rho+\theta,$$ we have that 
	$\|c\|\leq\rho+3\theta$, whenever $c\in C$.
	Moreover, since $C\subset A_n$ and $f_n\in F$, by (a) we have $\|c\|\geq\rho-3\theta$, whenever $c\in C$. Hence, we
	 can apply Lemma~\ref{lemma: unifrotund} to the set $C$ and we have $\|a_n-x_n\|=\mathrm{diam}(C)\leq M$. Then
	$$\mathrm{dist}(a_n, E)\leq \|a_n-e_n\|\leq \|a_n-x_n\|+\|e_n-x_n\|\leq M+\theta\leq 2M.$$
		The proof that  eventually (ii) holds is similar. 
\end{proof}

{
We are now ready to state and prove the main result of this paper.}

\begin{theorem}\label{teo: mainHilbert}
	 Let $A,B$ be   closed convex nonempty subsets of $X$ such that $E$ and $F$ are bounded. Suppose that the couple $(A,B)$ is regular, then the couple $(A,B)$ is $d$-stable.
	
\end{theorem}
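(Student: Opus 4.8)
The plan is to show that the quantities $\alpha_n := \mathrm{dist}(a_n, E)$ and $\beta_n := \mathrm{dist}(b_n, F)$ both tend to $0$, where $\{a_n\},\{b_n\}$ are the perturbed alternating projections sequences associated to arbitrary Attouch–Wets approximating sequences $\{A_n\},\{B_n\}$ and an arbitrary starting point $a_0$. Fix $\epsilon>0$; I would aim to prove that eventually $\alpha_n \le C\epsilon$ and $\beta_n \le C\epsilon$ for a fixed constant $C$ depending only on the geometry near $E$ (in particular on the regularity modulus and on $\mathrm{diam}(E+\epsilon B_X)$), which suffices since $\epsilon$ is arbitrary. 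The first preliminary observation is that, because $E$ and $F$ are bounded and $A_n\to A$, $B_n\to B$, the ``feasibility residuals'' cannot blow up: by Proposition~\ref{prop: normprojectionsmall}, once $\mathrm{dist}(b_n,F)$ (resp. $\mathrm{dist}(a_n,E)$) is small, the next iterate stays within a controlled distance of $E$ (resp. $F$). Combined with a rough a priori bound (the sequences, once they get close to $E\cup F$ at least once, remain in a bounded region, using $\|b_{n+1}-a_n\|\to\|v\|$ and the Attouch–Wets convergence), this gives a uniform bound $\mathrm{dist}(a_n,E)\le R$, $\mathrm{dist}(b_n,F)\le R$ for all large $n$ and some $R>0$. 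Actually one should be slightly careful: to even enter the small regime one needs to first argue that the residuals do decrease, so the logical order is to prove a contraction-type estimate on a bounded region directly.

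The heart of the argument is a geometric contraction step, modeled on the proof of the unperturbed convergence theorem given earlier but now handled through cosines rather than the clean orthogonality one has in the exact case. Suppose $\alpha_n = \mathrm{dist}(a_n,E)\ge 2\epsilon$ and $\beta_n = \mathrm{dist}(b_n,F)\ge 2\epsilon$, with $n$ large. Pick a near-projection point $e\in E$ of $a_{n-1}$ (or rather work in the plane through the relevant points) and use the fact that $b_n = P_{B_n}(a_{n-1})$, $a_n = P_{A_n}(b_n)$. Proposition~\ref{prop:eventuallycos<1} gives, for large $n$ and any $e\in E+\epsilon B_X$, that $\cos(a_n-e, b_n-(e+v)) \le \eta < 1$; Lemma~\ref{prop: eventuallycos0} gives, taking $x_n\in A_n$ with $\mathrm{dist}(x_n,E)\to 0$ (e.g. an Attouch–Wets approximation of a fixed point of $E$, or better a moving near-projection of $a_n$ onto $E$ transported into $A_n$), that $\cos(x_n - a_n, b_n - (a_n+v)) \le \delta$, i.e. the ``update direction'' $b_n - (a_n+v)$ is nearly orthogonal to the set $A_n$ at $a_n$. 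Then Fact~\ref{fact: quasiortho}, applied with $x = a_n - e$-type vector and $y = b_n - (e+v)$-type vector (for $e$ the relevant near-projection point of $a_n$ onto $E$), converts ``$\cos \le \eta$ and near-orthogonality $\le \delta$'' into a strict contraction $\mathrm{dist}(a_n, E) \le \eta' \,\mathrm{dist}(b_n,F)$ with $\eta'<1$; symmetrically, via part (ii) of the propositions, $\mathrm{dist}(b_{n+1},F)\le \eta'\,\mathrm{dist}(a_n,E)$. Together with the trivial monotonicity-type inequalities $\mathrm{dist}(a_n,E)\le \mathrm{dist}(b_n,F) + o(1)$ and $\mathrm{dist}(b_{n+1},F)\le \mathrm{dist}(a_n,E)+o(1)$ coming from $P_{A_n}$, $P_{B_n}$ being (approximately, in the Attouch–Wets sense) nonexpansive-toward-$E$-type maps — here the $o(1)$ absorbs the perturbation error from $A_n\ne A$, $B_n\ne B$ — one concludes by the standard argument: as long as the residual exceeds $2\epsilon$ it shrinks geometrically, so after finitely many steps it drops below $2\epsilon$ and then, by the Proposition~\ref{prop: normprojectionsmall}-type control plus the contraction, it stays within $O(\epsilon)$.

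I would organize the proof as: (1) fix $\epsilon>0$ and set $M$ small relative to $\epsilon$; invoke Propositions \ref{prop:eventuallycos<1}, \ref{prop: normprojectionsmall} and Lemma~\ref{prop: eventuallycos0} to obtain a single index $n^*$ beyond which all their conclusions hold simultaneously, with constants $\eta,\eta',\delta$ chosen to satisfy the hypothesis $\frac{\delta+\eta}{1-\delta}\le\eta'<1$ of Fact~\ref{fact: quasiortho}; (2) show that for $n\ge n^*$, whenever $\mathrm{dist}(a_n,E)\ge 2\epsilon$ and $\mathrm{dist}(b_n,F)\ge 2\epsilon$, Fact~\ref{fact: quasiortho} yields $\mathrm{dist}(a_n,E)\le \eta'\,\mathrm{dist}(b_n,F)$, and symmetrically for $b_{n+1}$ versus $a_n$; (3) combine with the perturbed monotonicity estimates and run the elementary ``eventually below threshold, then trapped'' argument to get $\limsup_n \mathrm{dist}(a_n,E)\le C\epsilon$ and $\limsup_n \mathrm{dist}(b_n,F)\le C\epsilon$; (4) let $\epsilon\to 0$. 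The main obstacle, and the place requiring the most care, is step (2)–(3): matching up the various ``$\epsilon$-thresholds'', ``$2\epsilon$-thresholds'' and the perturbation errors $o(1)$ so that the contraction actually closes — in particular ensuring the near-orthogonality vector $x_n - a_n$ is eventually nonzero and the cosines are well-defined (as addressed in Lemma~\ref{prop: eventuallycos0}), and handling the case $v\ne 0$ (empty intersection) where ``distance decreases under projection'' must be replaced by the displacement-corrected version from Fact~\ref{fact: BB93}. A secondary subtlety is the choice of the auxiliary points $x_n\in A_n$, $y_n\in B_n$ approximating $E$, $F$: one wants them close to $a_n$'s and $b_n$'s respective projections onto $E$, $F$, which requires knowing those projections stay in a bounded set — guaranteed by boundedness of $E$, $F$ — so the Attouch–Wets convergence can be applied on a fixed bounded window.
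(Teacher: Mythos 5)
Your proposal is correct and follows essentially the same route as the paper's proof: the contraction step obtained by combining Proposition~\ref{prop:eventuallycos<1}, Lemma~\ref{prop: eventuallycos0} and Fact~\ref{fact: quasiortho} (with auxiliary points $x_n\in A_n$, $y_n\in B_n$ approximating points of $E$ and $F$), the trapping estimate from Proposition~\ref{prop: normprojectionsmall}, and the final ``drop below the threshold, then stay within $O(\epsilon)$'' argument all match the paper's structure. The only cosmetic difference is that the paper dispenses with your auxiliary ``perturbed monotonicity $+\,o(1)$'' inequalities, deriving the geometric decay $(\beta_1),(\beta_2)$ directly from the triangle inequality and relying solely on $(\alpha_1),(\alpha_2)$ for the trapping.
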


\begin{proof}
Let $a_0\in X$ and let $\{a_n\}$ and $\{b_n\}$ {be} the corresponding perturbed alternating projections sequences, { i.e,
$$a_n=P_{A_n}(b_n) \quad \text{and} \quad b_n=P_{B_n}(a_{n-1}).$$ 
First of all, we remark that it is enough to prove that $\mathrm{dist}(a_n, E)\to 0$ since the proof that $\mathrm{dist}(b_n, F)\to 0$ follows by the symmetry of the problem. 
Therefore our aim is to prove that for each $M>0$,  eventually we have $$\mathrm{dist}(a_n, E)\leq M.$$}

By applying Proposition~\ref{prop: normprojectionsmall} twice, there exists $0<\epsilon<M/2$ and $n_0\in\N$ such that, for each $n\geq n_0$, we have:
	\begin{enumerate}
	\item[($\alpha_1$)] if $\mathrm{dist}(b_n, F)\leq 2\epsilon$ then $\mathrm{dist}(a_n, E)\leq M$ and $\mathrm{dist}(b_{n+1}, F)\leq M$;\\
	 \item[($\alpha_2$)] if  $\mathrm{dist}(a_n, E)\leq 2\epsilon$ then $\mathrm{dist}(b_{n+1}, F)\leq M$ and $\mathrm{dist}(a_{n+1}, E)\leq M$.
\end{enumerate}

{Now, }by Proposition~\ref{prop:eventuallycos<1} there exist $\eta\in(0,1)$ and $n_1\geq n_0$ such that: 
\begin{enumerate}
	\item {if $\mathrm{dist}(a_n,E)\geq2\epsilon$	and $\mathrm{dist}(b_n,F)\geq2\epsilon$ then $$\cos\bigl(a_n-e,b_n-(e+v)\bigr)\leq\eta,$$ 
	whenever $n\geq n_1$ and $e\in E+\epsilon B_X$;}	
	\item {if $\mathrm{dist}(a_n,E)\geq2\epsilon$	and $\mathrm{dist}(b_{n+1},F)\geq2\epsilon$ then $$\cos\bigl(b_{n+1}-f,a_n-(f-v)\bigr)\leq\eta,$$ 
	whenever $n\geq n_1$ and $f\in F+\epsilon B_X$.}
\end{enumerate}

For each $n\in\N$, let  $e_n\in E$ and $f_n\in F$ be such that $\|a_n-e_n\|=\mathrm{dist}(a_n,E)$ and $\|b_n-f_n\|=\mathrm{dist}(b_n,F)$.
Let us consider  sequences $\{x_n\}$ and $\{y_n\}$ such that $x_n\in A_n, y_n\in B_n$ ($n\in\N$) and such that $\|x_n+v-f_n\|\to0$ and
$\|y_{n+1}-v-e_n\|\to0$. Moreover, without any loss of generality we can suppose that $x_n\in E+\epsilon B_X$ and $y_n\in F+\epsilon B_X$, whenever $n\geq n_1$.
  Now, take $\eta'\in(\eta,1)$ and  $\delta\in(0,1)$ satisfying $\frac{\delta+\eta}{1-\delta}\leq \eta'$.
By applying Proposition~\ref{prop: eventuallycos0} there exists $n_2\geq n_1$  such that,
for each $n\geq n_2$, we have:
\begin{enumerate}
	\item[(iii)] {if $\mathrm{dist}(a_n,E)\geq2\epsilon$	and $\mathrm{dist}(b_n,F)\geq2\epsilon$}    
	then 
	$$\cos\bigl(x_n-a_n,b_n-(a_n+v)\bigr)\leq\delta;$$	
	\item[(iv)] if {$\mathrm{dist}(a_n,E)\geq2\epsilon$	and $\mathrm{dist}(b_{n+1},F)\geq2\epsilon$}
	then 
	$$\cos\bigl(y_{n+1}-b_{n+1},a_n-(b_{n+1}-v)\bigr)\leq\delta.$$	
\end{enumerate}

Taking into account (i)-(iv) and Fact~\ref{fact: quasiortho}, if $n\geq n_2$ then the following conditions hold:
\begin{itemize}
	\item if $\mathrm{dist}(a_n,E)\geq2\epsilon$	and $\mathrm{dist}(b_n,F)\geq2\epsilon$ then {$$ \|a_n-x_n\|\leq\eta'\|b_n-(x_n+v)\|;$$}
	\item if $\mathrm{dist}(a_n,E)\geq2\epsilon$	and $\mathrm{dist}(b_{n+1},F)\geq2\epsilon$ then  {$$\|b_{n+1}-y_{n+1}\|\leq\eta'\|a_n-(y_{n+1}-v)\|.$$}
\end{itemize}

{Now}, by the triangle inequality, we have that, for each
 $n\geq n_2$,  the following conditions hold:
\begin{itemize}
	\item if $\mathrm{dist}(a_n,E)\geq2\epsilon$	and $\mathrm{dist}(b_n,F)\geq2\epsilon$ then {$$ \mathrm{dist}(a_n,E)\leq\|a_n-(f_n-v)\|\leq\eta'(\|b_n-f_n\|+\|x_n+v-f_n\|)+\|x_n+v-f_n\|;$$
	}
	\item if $\mathrm{dist}(a_n,E)\geq2\epsilon$	and $\mathrm{dist}(b_{n+1},F)\geq2\epsilon$ then 
	\begin{eqnarray*}
	 \mathrm{dist}(b_{n+1},F)&\leq&\|b_{n+1}-(e_n+v)\|\\
	 &\leq&\eta'(\|a_n-e_n\|+\|y_{n+1}-v-e_n\|)+\|y_{n+1}-v-e_n\|.
	\end{eqnarray*}
\end{itemize}

If we consider $\eta''\in(\eta',1)$, since {$\|x_n+v-f_n\|\to0$} and {$\|y_{n+1}-v-e_n\|\to0$}, there exists $n_3\geq n_2$ such that if 
$n\geq n_3$ then the following conditions hold:
\begin{enumerate}
	\item[($\beta_1$)] if $\mathrm{dist}(a_n,E)\geq2\epsilon$	and $\mathrm{dist}(b_n,F)\geq2\epsilon$ then $$ \mathrm{dist}(a_n,E)\leq\eta''\mathrm{dist}(b_n,F);$$
	 \item[($\beta_2$)]if $\mathrm{dist}(a_n,E)\geq2\epsilon$	and $\mathrm{dist}(b_{n+1},F)\geq2\epsilon$ then $$ \mathrm{dist}(b_{n+1},F)\leq\eta''\mathrm{dist}(a_n,E).$$
\end{enumerate}

Now, it is easy to see that there exists $n_4\geq n_3$ such that $\mathrm{dist}(a_{n_4},E)\leq2\epsilon$	or $\mathrm{dist}(b_{n_4},F)\leq2\epsilon$. Indeed, since $\eta''<1$, the fact that $$\mathrm{dist}(a_n,E)\geq2\epsilon\ \ \text{and}\ \ \mathrm{dist}(b_n,F)\geq2\epsilon,\ \ \ \text{whenever}\ n\geq n_3,$$ contradicts the fact that { ($\beta_1$) and ($\beta_2$) are} satisfied whenever $n\geq n_3$. Since {($\alpha_1$), ($\alpha_2$) and ($\beta_1$), ($\beta_2$)} are satisfied whenever $n\geq n_3$, we obtain that $\mathrm{dist}(a_{n},E)\leq M$
whenever $n>n_4$.
\end{proof}

{If the intersection of $A$ and $B$ is nonempty, we obtain, as an immediate consequence of Theorem \ref{teo: mainHilbert}, the following result.}
\begin{corollary} 
	 Let $A,B$ be   closed convex nonempty subsets of $X$ such that $A\cap B$ is bounded and nonempty. If the couple $(A,B)$ is  regular then the perturbed alternating projections sequences $\{a_n\}$ and $\{b_n\}$ satisfy   $\mathrm{dist}(a_n,A\cap B)\to 0$ and $\mathrm{dist}(b_n,A\cap B)\to 0$
	\end{corollary}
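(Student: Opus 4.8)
The plan is to deduce the Corollary directly from Theorem~\ref{teo: mainHilbert} by checking that, when $A\cap B\neq\emptyset$, the hypotheses of the Theorem are met and the two distance functions coincide. First I would recall the elementary fact (noted right after the definition of $E,F$ in Section~2) that if $A\cap B\neq\emptyset$, then the displacement vector $v$ for the couple $(A,B)$ is null and $E=F=A\cap B$. Hence the boundedness of $A\cap B$ assumed in the Corollary is exactly the boundedness of $E$ and $F$ required by Theorem~\ref{teo: mainHilbert}, and the nonemptiness assumption on $A\cap B$ gives that $E,F$ are nonempty, so Basic assumptions~\ref{ba}(i) and the standing hypotheses of the Theorem hold. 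The regularity hypothesis on $(A,B)$ is carried over verbatim.

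Next I would apply Theorem~\ref{teo: mainHilbert} to conclude that the couple $(A,B)$ is $d$-stable; by Definition~\ref{def: stability} this means that for any sequences $\{A_n\},\{B_n\}\subset\C(X)$ Attouch--Wets converging to $A$ and $B$, and any starting point $a_0$, the perturbed alternating projections sequences satisfy $\mathrm{dist}(a_n,E)\to 0$ and $\mathrm{dist}(b_n,F)\to 0$. Finally, since $E=F=A\cap B$, these are precisely the statements $\mathrm{dist}(a_n,A\cap B)\to 0$ and $\mathrm{dist}(b_n,A\cap B)\to 0$, which is the conclusion of the Corollary.

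There is essentially no obstacle here: the entire content is already in Theorem~\ref{teo: mainHilbert}, and the Corollary is only a specialization obtained by substituting $v=0$ and $E=F=A\cap B$. The one point worth stating explicitly (rather than glossing over) is the identification $E=F=A\cap B$ in the case of nonempty intersection, which is immediate from the definitions of $E$ and $F$ since for $a\in A\cap B$ one has $d(a,B)=0=d(A,B)$, and conversely $d(a,B)=d(A,B)=0$ forces $a\in \overline{B}=B$. Everything else is a direct invocation of the main theorem.
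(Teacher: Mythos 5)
Your proposal is correct and matches the paper exactly: the paper states this corollary as an immediate consequence of Theorem~\ref{teo: mainHilbert}, obtained precisely by the specialization $v=0$ and $E=F=A\cap B$ that you spell out. Nothing further is needed.
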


{We conclude this section by putting in evidence some relationships between the results of \cite{DebeMigl} and Theorem \ref{teo: mainHilbert}.}
	{First of all, we briefly recall some notions.}	
{	\begin{definition}[{see, e.g., \cite[Definition~7.10]{FHHMZ}}]\label{def:strexp} Let $A$ be a nonempty subset of a normed space $Z$. A point $a\in A$
		is called a strongly exposed point of $A$ if there exists  a
		support functional $f\in Z^*\setminus\{0\}$ for $A$ at $a$ $\bigl($i.e.,
		$f (a) = \sup f(A)$$\bigr)$, such that  $x_n\to a$ for all sequences
		$\{x_n\}$ in $A$ such that $\lim_n f(x_n) = \sup f(A)$. In this
		case, we say that $f$ strongly exposes $A$ at $a$.
	\end{definition}}

	\begin{definition}[{see, e.g., \cite[Definition~1.3]{KVZ}}]
		Let $A$ be a body in a normed space $Z$. We say that $x\in\partial A$ is an
		{\em LUR (locally uniformly rotund) point} of $A$ if for each
		$\epsilon>0$ there exists $\delta>0$ such that if $y\in A$
		and $\dist(\partial A,(x+y)/2)<\delta$ then $\|x-y\|<\epsilon$. 
	\end{definition}
	We say that $A$ is an {\em LUR body} if each point in
	$\partial A$ is an LUR point of $A$. The following lemma shows that each LUR point is a strongly exposed point. 	
	\begin{lemma}[{\cite[Lemma~4.3]{DebeMiglMol}} ]\label{slicelimitatoselur} Let $A$ be a body in a normed space $Z$
		and suppose that $a\in\partial A$ is an LUR point of $A$. Then, if
		$f\in S_{Z^*}$ is a support functional for $A$ in $a$, $f$
		strongly exposes $A$ at $a$.
	\end{lemma}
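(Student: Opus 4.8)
The plan is to argue directly from the definitions. Fix a sequence $\{x_n\}$ in $A$ with $f(x_n)\to\sup f(A)=f(a)$; I must show $x_n\to a$. Set $m_n=\tfrac12(a+x_n)\in A$ by convexity, so that $f(m_n)=\tfrac12\bigl(f(a)+f(x_n)\bigr)\to f(a)=\sup f(A)$.

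The key step is to prove that $\dist(\partial A,m_n)\to 0$. Suppose not; then, after passing to a subsequence, $\dist(\partial A,m_n)\geq\rho$ for some $\rho>0$. Since $m_n\in A$ lies at positive distance from $\partial A$, it belongs to $\inte A$, and a routine convexity argument shows that $\|z-m_n\|\leq\rho$ implies $z\in A$ (any segment from the interior point $m_n$ to a point outside $A$ must meet $\partial A$ at distance at most $\|z-m_n\|$ from $m_n$). Because $\|f\|=1$, for every $t<\rho$ there is $u\in Z$ with $\|u\|<\rho$ and $f(u)>t$; then $m_n+u\in A$, hence $\sup f(A)\geq f(m_n)+f(u)>f(m_n)+t$. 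Letting $t\uparrow\rho$ gives $\sup f(A)\geq f(m_n)+\rho$ for all such $n$, which contradicts $f(m_n)\to\sup f(A)$. Therefore $\dist(\partial A,m_n)\to 0$.

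Now invoke the hypothesis that $a$ is an LUR point of $A$: given $\epsilon>0$ there is $\delta>0$ such that, whenever $y\in A$ and $\dist\bigl(\partial A,(a+y)/2\bigr)<\delta$, one has $\|a-y\|<\epsilon$. Since $\dist\bigl(\partial A,(a+x_n)/2\bigr)=\dist(\partial A,m_n)\to 0$, for all large $n$ we get $\|a-x_n\|<\epsilon$. As $\epsilon>0$ was arbitrary, $x_n\to a$, which is precisely the statement that $f$ strongly exposes $A$ at $a$.

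The only mildly delicate point is the passage from $\dist(m_n,\partial A)\geq\rho$ to the inclusion of the ball of radius $\rho$ about $m_n$ in $A$; once that is in hand, the argument is just an unwinding of the definition of strongly exposed point together with the LUR condition, so no substantial obstacle remains.
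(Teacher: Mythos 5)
Your proof is correct. The paper does not reprove this lemma (it is quoted from \cite{DebeMiglMol}), but your argument --- showing via the norm-one support functional that the midpoints $m_n=(a+x_n)/2$ must approach $\partial A$ (since otherwise an open $\rho$-ball about $m_n$ lies in $A$ and $f$ would exceed its supremum there), and then applying the LUR condition at $a$ --- is the standard and natural route, and every step, including the ball-inclusion argument, checks out.
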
 

First, we show that a more general variant of the assumptions of one of the main results in \cite{DebeMigl}, namely \cite[Theorem~3.3]{DebeMigl}, imply that the couple $(A,B)$ is regular. It is interesting to remark that here we consider also the case in which $A$ and $B$  do not intersect.
		 	
	\begin{proposition} \label{prop:stronglyexpregular}
		Let $A,B$ be nonempty closed convex subsets of $X$. Let us suppose that there exist $e \in A \cap (B-v)$ and a linear continuous functional $x^*\in S_{X^*}$ such that
		$$ \inf x^*(B-v)=x^*(e)=\sup x^*(A)$$
		and such that $x^*$ strongly exposes $A$ at $e$. Then the couple $(A,B)$ is regular. 
	\end{proposition}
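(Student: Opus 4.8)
The plan is to verify the definition of regularity directly. We are given $e\in A\cap(B-v)$ and $x^*\in S_{X^*}$ with $\inf x^*(B-v)=x^*(e)=\sup x^*(A)$, and $x^*$ strongly exposes $A$ at $e$. Note first that $E$ is nonempty (it contains $e$, since $e\in A$ and $e\in B-v$ forces $e\in\mathrm{Fix}(P_AP_B)=E$ by Fact~\ref{fact: BB93}(ii)); and in fact $E=\{e\}$, because any $e'\in E\subset A\cap(B-v)$ must satisfy $x^*(e')\le\sup x^*(A)=x^*(e)$ and $x^*(e')\ge\inf x^*(B-v)=x^*(e)$, so $x^*(e')=\sup x^*(A)$, and then strong exposedness forces $e'=e$ (apply the definition to the constant sequence, or rather: the supremum is attained only at $e$). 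So the target set in the definition of regularity is the singleton $\{e\}$.

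Next I would argue by contradiction following the scheme already used in Proposition~\ref{prop: regular-largedistances}: if $(A,B)$ is not regular, there exist $\epsilon>0$ and a sequence $\{x_n\}\subset X$ with $\|x_n-e\|=\mathrm{dist}(x_n,E)>\epsilon$ and $\max\{\mathrm{dist}(x_n,A),\mathrm{dist}(x_n,B-v)\}\to0$. Using the same rescaling trick (pick $a_n\in A$, $b_n\in B$ nearly realizing the distances, set $\lambda_n=\epsilon/\|x_n-e\|\in(0,1)$, $z_n=\lambda_n x_n+(1-\lambda_n)e$, $a'_n=\lambda_n a_n+(1-\lambda_n)e\in A$, $b'_n=\lambda_n b_n+(1-\lambda_n)(e+v)\in B$), I may assume from the start that $\|x_n-e\|=\epsilon$ for all $n$, i.e.\ $\{x_n\}$ is bounded and bounded away from $e$, while still $\mathrm{dist}(x_n,A)\to0$ and $\mathrm{dist}(x_n,B-v)\to0$. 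Let $a_n\in A$ with $\|x_n-a_n\|=\mathrm{dist}(x_n,A)\to0$, so $\{a_n\}\subset A$ is bounded with $\|a_n-e\|\to\epsilon$. Since $x^*(x_n)\le x^*(a_n)+\|x_n-a_n\|\le x^*(e)+\|x_n-a_n\|$ and symmetrically $x^*(x_n)\ge x^*(e)-\mathrm{dist}(x_n,B-v)$ (using $\inf x^*(B-v)=x^*(e)$), we get $x^*(x_n)\to x^*(e)$, hence $x^*(a_n)\to x^*(e)=\sup x^*(A)$. Because $x^*$ strongly exposes $A$ at $e$, this forces $a_n\to e$, hence $x_n\to e$, contradicting $\|x_n-e\|=\epsilon>0$.

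The only genuine content beyond bookkeeping is the passage "$x^*(a_n)\to\sup x^*(A)$ implies $a_n\to e$", which is exactly the definition of strongly exposed point, so there is no real obstacle here; the main thing to be careful about is the direction of the inequality coming from $B-v$ — one must use that $e$ realizes the \emph{infimum} of $x^*$ over $B-v$ to control $x^*(x_n)$ from below, and that $e$ realizes the \emph{supremum} of $x^*$ over $A$ to control it from above, so that the two bounds pinch $x^*(x_n)$ to $x^*(e)$. An alternative, perhaps cleaner, route is to skip the contradiction and prove the quantitative statement directly: given $\epsilon>0$, strong exposedness yields $\delta_0>0$ such that $a\in A$, $x^*(a)\ge x^*(e)-\delta_0$ implies $\|a-e\|<\epsilon/2$; then for $x$ with $\mathrm{dist}(x,A)\le\delta$ and $\mathrm{dist}(x,B-v)\le\delta$ small, choosing $a\in A$ within $2\delta$ of $x$ one checks $x^*(a)\ge x^*(x)-2\delta\ge x^*(e)-3\delta$, so for $\delta$ small enough $\|a-e\|<\epsilon/2$ and hence $\mathrm{dist}(x,E)\le\|x-e\|\le\|x-a\|+\|a-e\|<\epsilon$. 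Either way the proof is short.
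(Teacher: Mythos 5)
Your proof is correct and follows essentially the same route as the paper's: identify $E=\{e\}$, assume non-regularity, pinch $x^*(x_n)$ between the bound from $\sup x^*(A)$ and the bound from $\inf x^*(B-v)$, and invoke strong exposedness to force $a_n\to e$ and hence $x_n\to e$, a contradiction. The rescaling step you import from Proposition~\ref{prop: regular-largedistances} is harmless but unnecessary here, since the conclusion $x_n\to e$ already contradicts $\mathrm{dist}(x_n,E)>\epsilon$ directly.
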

\begin{proof} {There is no loss of  generality in assuming $e=0$.}
	It is a simple matter to see that $E=\{0\}$. Now, suppose on the contrary that  that $(A,B)$ is not regular. Therefore there exist  sequences $\{x_n\}\subset X$, $\{a_n\}\subset A$, $\{b_n\}\subset B$, and a real number $\bar{\varepsilon}>0$ such that 
	\begin{equation} \label{dist e}
		\mathrm{dist}(x_n,E)=\|x_n\|>\bar{\varepsilon}, 
	\end{equation}
		and such that
\begin{equation}\label{eq: notregular}
\mathrm{dist}(x_n,A)=\|x_n -a_n\|\rightarrow 0, \quad \mathrm{dist}(x_n,B-v)=\|x_n -b_n+v\|\rightarrow 0.
\end{equation}	
	
		 By (\ref{eq: notregular}) and since $ {\inf x^*(B-v)}=0=\sup x^*(A)$, it holds $\lim_n x^*(x_n)=0$ and hence $\lim_n x^*(a_n)=0$. Since $x^*$ strongly exposes $A$ at $e$, the last equality implies that $\|a_n\|\rightarrow 0$. We conclude that $\|x_n\|\rightarrow 0$, contrary to (\ref{dist e}).
\end{proof}

By combining the previous proposition and Theorem~\ref{teo: mainHilbert}, we obtain the following corollary generalizing  \cite[Theorem~3.3]{DebeMigl}.

\begin{corollary}\label{Corollary:stronglyexp}
	Let  $A,B$ be
	nonempty closed convex subsets of $X$.  
	Let us suppose that there exist $e \in A \cap (B-v)$ and a linear continuous functional $x^*\in S_{X^*}$ such that
	$$ \inf x^*(B-v)=x^*(e)=\sup x^*(A)$$
	and such that $x^*$ strongly exposes $A$ at $e$.
	Then, 
	the couple $(A,B)$ is stable.
\end{corollary}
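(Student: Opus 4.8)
The plan is to deduce Corollary~\ref{Corollary:stronglyexp} by combining Proposition~\ref{prop:stronglyexpregular} with Theorem~\ref{teo: mainHilbert}, plus the elementary observation recorded just after Definition~\ref{def: stability}: if $E$ and $F$ are singletons, then $d$-stability and stability coincide. So the argument is essentially a bookkeeping one, and the only genuine content is checking that the hypotheses of the two earlier results hold.

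First I would invoke Proposition~\ref{prop:stronglyexpregular} verbatim: its hypotheses are precisely those of the corollary, so we conclude immediately that the couple $(A,B)$ is regular. Next, I would observe that in the proof of Proposition~\ref{prop:stronglyexpregular} it was shown (after the harmless normalization $e=0$) that $E=\{0\}$; by Fact~\ref{fact: BB93}(1) we have $E+v=F$, so $F=\{v\}$ is also a singleton. In particular $E$ and $F$ are nonempty and bounded, so the standing hypotheses of Theorem~\ref{teo: mainHilbert} are met. Applying that theorem gives that $(A,B)$ is $d$-stable, i.e.\ for every admissible choice of $\{A_n\},\{B_n\}$ Attouch--Wets converging to $A,B$ and every starting point $a_0$ the perturbed sequences satisfy $\mathrm{dist}(a_n,E)\to 0$ and $\mathrm{dist}(b_n,F)\to 0$.

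Finally I would upgrade $d$-stability to stability using the fact that $E=\{e\}$ and $F=\{e+v\}$ are singletons: $\mathrm{dist}(a_n,E)\to 0$ is literally $\|a_n-e\|\to 0$ and $\mathrm{dist}(b_n,F)\to 0$ is literally $\|b_n-(e+v)\|\to 0$, so $\{a_n\}$ and $\{b_n\}$ converge in norm, which is exactly the definition of stability (and matches the description in the Remark following Definition~\ref{def: stability}). Hence $(A,B)$ is stable.

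I do not expect a real obstacle here, since every step is a citation of an earlier result; the one point requiring a sentence of care is verifying that $E,F$ are singletons (hence bounded) so that Theorem~\ref{teo: mainHilbert} applies and so that the passage from $d$-stability to stability is legitimate — this is immediate from the computation $E=\{0\}$ inside the proof of Proposition~\ref{prop:stronglyexpregular} together with $F=E+v$ from Fact~\ref{fact: BB93}(1).
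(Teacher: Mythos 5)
Your proposal is correct and is exactly the argument the paper intends: regularity from Proposition~\ref{prop:stronglyexpregular}, boundedness of $E$ and $F$ from the observation that $E=\{e\}$ is a singleton (hence $F=E+v=\{e+v\}$ by Fact~\ref{fact: BB93}), $d$-stability from Theorem~\ref{teo: mainHilbert}, and the upgrade to stability via the remark that $d$-stability and stability coincide when $E$ and $F$ are singletons. Nothing is missing.
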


 Moreover, in \cite{DebeMigl}, the authors proved the following sufficient condition for the stability of a  couple $(A,B)$.

\begin{theorem}[{\cite[Theorem~4.2]{DebeMigl}}]\label{theorem:corpilur} Let $X$ be a Hilbert space and $A,B$
	nonempty closed convex subsets of $X$. 
	Suppose that $\inte(A\cap B)\neq\emptyset$, then the couple $(A,B)$ is stable.
\end{theorem}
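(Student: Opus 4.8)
The plan is to exploit the strong geometric information carried by an interior point of $A\cap B$: a whole ball lies inside the intersection and, crucially, inside every perturbed set $A_n\cap B_n$ from some index on. This will produce a \emph{summable} bound on the increments of the perturbed sequences, forcing them to be Cauchy. Note first that $\inte(A\cap B)\neq\emptyset$ forces $A\cap B\neq\emptyset$, so the displacement vector is $v=0$ and $E=F=A\cap B$; thus proving stability amounts to showing that $\{a_n\}$ and $\{b_n\}$ converge in norm to a common point of $A\cap B$. I emphasize that one cannot simply invoke Theorem~\ref{teo: mainHilbert} here, since that result requires $E$ to be bounded whereas $A\cap B$ may well be unbounded; a self-contained argument is therefore needed, and the interior point provides exactly the extra leverage that replaces boundedness.

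Fix $c$ and $\rho>0$ with $B(c,\rho)\subset A\cap B$, so $B(c,\rho)\subset A$ and $B(c,\rho)\subset B$. The first step is to show that for every $\rho'\in(0,\rho)$ one eventually has $B(c,\rho')\subset A_n\cap B_n$. I would argue by contradiction using separation: if some $x_n\in B(c,\rho')$ failed to lie in $A_n$, then $u_n=(x_n-P_{A_n}x_n)/\|x_n-P_{A_n}x_n\|$ would separate $x_n$ from $A_n$, while the point $c+\rho' u_n\in B(c,\rho')\subset A$ satisfies $\dist(c+\rho'u_n,A_n)\to0$ by Attouch--Wets convergence (all the relevant points sit in a fixed ball, so a single localized excess $e_N(A,A_n)$ controls them). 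Comparing the values of the functional $\langle u_n,\cdot\rangle$ at $c+\rho'u_n$ and across the supporting hyperplane of $A_n$ at $P_{A_n}x_n$ then yields $\rho'\le\rho'+o(1)$ with the strict gap $\rho-\rho'$ collapsing, a contradiction. The same reasoning for $B_n$ gives $B(c,\rho')\subset A_n\cap B_n$ for all large $n$. This capture lemma is the step I expect to be the main obstacle, since it is the only place where the precise meaning of Attouch--Wets convergence must be handled with care.

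The heart of the matter is then the elementary estimate: if $B(c,\rho')\subset C$ with $C$ closed convex, then for every $x\in X$,
$$\|P_Cx-c\|^2\le\|x-c\|^2-2\rho'\,\dist(x,C).$$
This follows by expanding $\|x-c\|^2=\|x-P_Cx\|^2+\|P_Cx-c\|^2+2\langle x-P_Cx,P_Cx-c\rangle$ and using the supporting hyperplane of $C$ at $P_Cx$, evaluated at the point $c+\rho'u\in C$ with $u$ the outward normal, to obtain $\langle x-P_Cx,P_Cx-c\rangle\ge\rho'\dist(x,C)$. Applying this with $C=A_n$, $x=b_n$ (so $\dist(b_n,A_n)=\|b_n-a_n\|$) and with $C=B_n$, $x=a_{n-1}$ (so $\dist(a_{n-1},B_n)=\|a_{n-1}-b_n\|$), and chaining the two, I get for all large $n$
$$\|a_n-c\|^2\le\|a_{n-1}-c\|^2-2\rho'\bigl(\|b_n-a_n\|+\|a_{n-1}-b_n\|\bigr).$$
Hence $\{\|a_n-c\|^2\}$ is eventually nonincreasing, so the sequences are bounded, and telescoping gives $\sum_n\bigl(\|b_n-a_n\|+\|a_{n-1}-b_n\|\bigr)<\infty$.

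Finally, from summability the triangle inequality yields $\sum_n\|a_n-a_{n-1}\|<\infty$ and $\sum_n\|b_{n+1}-b_n\|<\infty$, so both $\{a_n\}$ and $\{b_n\}$ are Cauchy and converge in norm, say $a_n\to a^*$ and $b_n\to b^*$; since $\|a_n-b_n\|\to0$ we conclude $a^*=b^*$. To locate the limit, note that $a_n\in A_n$ and, the sequence being bounded, the localized excess gives $\dist(a_n,A)\le e_N(A_n,A)\to0$ for a fixed large $N$, so by continuity of the distance function $a^*\in A$; likewise $b^*\in B$. Therefore $a^*=b^*\in A\cap B=E=F$, which (as $v=0$, via the characterization in the Remark following Definition~\ref{def: stability}) is exactly norm convergence of the perturbed sequences to a common feasibility point. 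This proves that the couple $(A,B)$ is stable.
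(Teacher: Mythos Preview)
The paper does not give its own proof of this theorem: it is quoted verbatim from \cite[Theorem~4.2]{DebeMigl} and then used as a black box in the proof of Corollary~\ref{corollary:corpilur}. So there is no in-paper argument to compare against.

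Your argument is correct and self-contained. It is a clean Fej\'er-monotonicity approach: once a fixed ball $B(c,\rho')$ lies in every $A_n$ and $B_n$, the projection identity $\|P_Cx-c\|^2\le\|x-c\|^2-2\rho'\,\dist(x,C)$ (which you derive correctly from the variational inequality applied at $c+\rho'u$) makes $\|a_n-c\|^2$ eventually nonincreasing with summable decrements, hence $\{a_n\}$ and $\{b_n\}$ are Cauchy and share a limit in $A\cap B$. You are also right to stress that Theorem~\ref{teo: mainHilbert} is unavailable here because $A\cap B$ need not be bounded.

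One small wording issue in your capture-lemma sketch: testing the supporting functional at $c+\rho'u_n$ only yields $\|x_n-P_{A_n}x_n\|\to0$, not a contradiction. To get the contradiction you announce (``the strict gap $\rho-\rho'$ collapsing''), test instead at $c+\rho u_n\in\overline{B(c,\rho)}\subset A$; then the comparison gives $\rho-\rho'\le e_N(A,A_n)\to0$, which is the desired impossibility. Equivalently, one may simply observe that for convex $C$ the condition $\sup_{a\in\overline{B(c,\rho)}}\dist(a,C)\le\epsilon$ forces $B(c,\rho-\epsilon)\subset C$, and apply this with $\epsilon=e_N(A,A_n)$. With this tweak the step is complete.
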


By combining Corollary~\ref{Corollary:stronglyexp} and Theorem~\ref{theorem:corpilur},  we obtain the following sufficient condition for the stability of the couple $(A,B)$ generalizing \cite[Corollary~4.3, (ii)]{DebeMigl}.
	
\begin{corollary}\label{corollary:corpilur} Let $X$ be a Hilbert space, suppose that 	$A,B$ are bodies in $X$ and that $A$ is LUR. Then the couple $(A,B)$ is stable.
\end{corollary}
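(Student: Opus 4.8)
The plan is to deduce the result from the two sufficient conditions already available, namely Theorem~\ref{theorem:corpilur} and Corollary~\ref{Corollary:stronglyexp}, splitting according to whether $\inte(A\cap B)$ is empty. Throughout we may assume that $E$ and $F$ are nonempty, since otherwise the word ``stable'' has no meaning for the couple $(A,B)$. If $\inte(A\cap B)\neq\emptyset$, then Theorem~\ref{theorem:corpilur} applies directly and there is nothing more to prove. So assume from now on that $\inte(A\cap B)=\emptyset$; since $\inte(A\cap B)=\inte A\cap\inte B$, this means that the nonempty open convex sets $\inte A$ and $\inte B$ are disjoint.

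In this case the goal is to exhibit a point $e\in E=A\cap(B-v)$ and a functional $x^*\in S_{X^*}$ with $\inf x^*(B-v)=x^*(e)=\sup x^*(A)$ that moreover strongly exposes $A$ at $e$; Corollary~\ref{Corollary:stronglyexp} will then finish the argument. To choose $x^*$ I distinguish two subcases. If $v\neq0$, pick any $e\in E$, put $f=e+v\in F$, and take $x^*=v/\|v\|$ (identifying $X^*$ with $X$). By Fact~\ref{fact: BB93}, (iii), we have $P_A f=f-v=e$ and $P_B e=e+v=f$, so the variational inequality characterizing the metric projection onto a closed convex set yields $\langle v,a-e\rangle\leq0$ for every $a\in A$ and $\langle v,(b-v)-e\rangle\geq0$ for every $b\in B$, that is, $\sup x^*(A)=x^*(e)=\inf x^*(B-v)$. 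If $v=0$, then $E=A\cap B\neq\emptyset$; fix $e\in A\cap B$ and let $x^*\in S_{X^*}$ be a functional separating the disjoint nonempty open convex sets $\inte A$ and $\inte B$, so that $\sup x^*(A)\leq\inf x^*(B)$; sandwiching $x^*(e)$ between these two quantities forces $\sup x^*(A)=x^*(e)=\inf x^*(B)=\inf x^*(B-v)$.

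In either subcase $x^*\in S_{X^*}$ attains its supremum over the body $A$ at the point $e$, hence $e\in\partial A$ and $x^*$ is a support functional for $A$ at $e$. Since $A$ is an LUR body, $e$ is an LUR point of $A$, and Lemma~\ref{slicelimitatoselur} then guarantees that $x^*$ strongly exposes $A$ at $e$. Thus $e$ and $x^*$ satisfy the hypotheses of Corollary~\ref{Corollary:stronglyexp}, and we conclude that $(A,B)$ is stable. I expect the only delicate point to be the subcase $v=0$ with $\inte(A\cap B)=\emptyset$: there one has no displacement vector to exploit and must instead separate the interiors by Hahn--Banach and then check that the separating functional also plays the role of a strong exposing functional --- which works precisely because every point of $A\cap B$ then lies on the separating hyperplane, hence on $\partial A$, where the LUR hypothesis becomes available. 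The remaining verifications are routine.
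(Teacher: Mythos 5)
Your proof is correct and follows essentially the same route as the paper's: reduce to Theorem~\ref{theorem:corpilur} when $\inte(A\cap B)\neq\emptyset$, and otherwise produce a norm-one functional separating $A$ from $B-v$ at a point $e\in E$, so that Lemma~\ref{slicelimitatoselur} upgrades it to a strongly exposing functional and Corollary~\ref{Corollary:stronglyexp} applies. The only (harmless) divergence is in how the separating functional is obtained: you construct it explicitly as $v/\|v\|$ via the projection variational inequality when $v\neq0$, and by separating the disjoint open sets $\inte A$ and $\inte B$ when $v=0$, whereas the paper first notes that the LUR property forces $A\cap(B-v)$ to be a singleton and then invokes Hahn--Banach once.
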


\begin{proof}
	If $\inte(A\cap B)\neq\emptyset$, {the thesis follows by applying  Theorem~\ref{theorem:corpilur}.} If $\inte(A\cap B)=\emptyset$,
	since $A$ and $B$ are bodies, we have $\mathrm{int}(A)\cap B=\emptyset$. Since $A$ is LUR the intersection $A \cap (B-v)$ reduces to a singleton $\{e\}$. By the Hahn-Banach theorem, there exists a linear functional $x^*\in X^*$ such that $$ \inf x^*(B-v)=x^*(e)=\sup x^*(A).$$ Since $A$ is an LUR body, by Lemma \ref{slicelimitatoselur}, we have that $x^*$ strongly exposes $A$ at $e$. We are now in position to apply Corollary \ref{Corollary:stronglyexp} and conclude the proof.
	\end{proof}

	Finally, we show that  \cite[Theorem~5.2]{DebeMigl}, follows by Theorem~\ref{teo: mainHilbert}.
\begin{corollary}\label{cor:sottospazisommachiusa}
	Let  $U,V$ be closed subspaces of $X$ such that $U\cap V=\{0\}$ and $U+V$ is closed. Then the couple $(U,V)$ is stable.
\end{corollary}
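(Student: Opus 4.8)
The plan is to deduce Corollary~\ref{cor:sottospazisommachiusa} from Corollary~\ref{Corollary:stronglyexp} by exhibiting a suitable strongly exposing functional at the unique point of $U\cap V$. First I would observe that since $U,V$ are subspaces with $U\cap V=\{0\}$, their distance is $0$, the displacement vector $v$ is $0$, and $E=F=U\cap V=\{0\}$; in particular the hypotheses on boundedness of $E,F$ in Theorem~\ref{teo: mainHilbert} are trivially satisfied. So the whole issue is to verify that the couple $(U,V)$ is regular, and then Corollary~\ref{Corollary:stronglyexp} (whose hypothesis is exactly a strongly exposing functional common to $A$ and $B-v$ at a point of $A\cap(B-v)$) — or more directly Theorem~\ref{teo: mainHilbert} together with the fact that $E,F$ are singletons, which upgrades $d$-stability to stability — will finish the proof.

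The key point is the classical fact that for two closed subspaces $U,V$ of a Hilbert space with $U+V$ closed, the pair $(U,V)$ is \emph{linearly regular}: there is a constant $\kappa\geq 1$ such that $\dist(x,U\cap V)\leq \kappa\max\{\dist(x,U),\dist(x,V)\}$ for all $x\in X$. This is equivalent to the positivity of the Friedrichs angle between $U$ and $V$, and the closedness of $U+V$ is exactly the condition guaranteeing this (see, e.g., \cite{BauschkeBorwein93}). Linear regularity for all $x$ is a fortiori linear regularity for points bounded away from $E=\{0\}$, so by the equivalence $(iii)\Leftrightarrow(i)$ in Proposition~\ref{prop: regular-largedistances} the couple $(U,V)$ is regular. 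Here $v=0$ so $B-v=V$, and $E=\{0\}$, so no translation subtleties arise.

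With regularity in hand, I would invoke Theorem~\ref{teo: mainHilbert}: since $E$ and $F$ are bounded (indeed singletons), $(U,V)$ is $d$-stable, meaning $\dist(a_n,E)\to 0$ and $\dist(b_n,F)\to 0$ for every choice of Attouch--Wets approximating sequences and every starting point. But $E=F=\{0\}$, so this says precisely $a_n\to 0$ and $b_n\to 0$ in norm; by the Remark following Definition~\ref{def: stability} (with $e=0$, $v=0$), this is exactly the statement that $(U,V)$ is stable. Alternatively, one can route through Corollary~\ref{Corollary:stronglyexp} directly: since $U\cap V=\{0\}$, the Hahn--Banach theorem gives $x^*\in S_{X^*}$ with $\inf x^*(V)=x^*(0)=\sup x^*(U)=0$; regularity of $(U,V)$ forces this $x^*$ to strongly expose $U$ at $0$ (if $u_n\in U$ with $x^*(u_n)\to 0$ failed to converge to $0$, one builds points violating regularity as in the proof of Proposition~\ref{prop:stronglyexpregular}), and Corollary~\ref{Corollary:stronglyexp} applies.

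The only genuinely nontrivial ingredient is the linear regularity of a pair of closed subspaces whose sum is closed — i.e., that closedness of $U+V$ implies a positive angle between $U$ and $V$. Everything else is bookkeeping: identifying $v=0$ and $E=F=\{0\}$, checking that regularity in the sense of Definition~\ref{def: regularity} follows, and translating $d$-stability into stability via the singleton structure of $E,F$. I expect this subspace-angle fact to be quoted from \cite{BauschkeBorwein93} rather than reproved, so the corollary's proof should be short.
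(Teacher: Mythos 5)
Your proposal is correct and follows essentially the same route as the paper: regularity of $(U,V)$ is obtained from the closedness of $U+V$ via \cite{BauschkeBorwein93} (the paper cites \cite[Corollary~4.5]{BauschkeBorwein93} directly, while you pass through linear regularity and Proposition~\ref{prop: regular-largedistances}, which amounts to the same thing), then Theorem~\ref{teo: mainHilbert} gives $d$-stability, and the singleton structure of $E=F=\{0\}$ upgrades this to stability. The alternative detour through Corollary~\ref{Corollary:stronglyexp} is unnecessary but harmless.
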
 
\begin{proof}
	Since $U+V$ is closed, by \cite[Corollary~4.5]{BauschkeBorwein93}, the couple $(U,V)$ is regular. By Theorem~\ref{teo: mainHilbert}, the couple $(U,V)$ is $d$-stable. Since $U\cap V$ is a singleton, the couple $(U,V)$ is stable. 
\end{proof}

\section{Final remarks, examples, and an open problem}

Known examples show that the hypothesis about regularity of the couple $(A,B)$, in Theorem~\ref{teo: mainHilbert}, is necessary. To see this, it is indeed sufficient to consider any couple $(A,B)$ of sets such that $A\cap B$ is a singleton and such that, for a suitable starting point, the method of alternating projections does not converge (see \cite{Hundal} for such a couple of sets). 

A natural question is whether, in the same theorem, the hypothesis about regularity of the couple $(A,B)$ can be replaced by the weaker hypothesis  that ``for any starting point the method of alternating projections converges''. The answer to previous question is negative; indeed, in \cite[Theorem~5.7]{DebeMigl}, the authors provided an example of a couple $(A,B)$ of closed subspaces of a Hilbert space such that $A\cap B=\{0\}$ and such that the couple $(A,B)$ is not stable (and hence not $d$-stable since $A\cap B$ is a singleton).{It is interesting to observe} that, by the classical Von Neumann result \cite{vonNeumann}, the method of alternating projections converges for this couple of sets.

The next example shows that, if we consider  closed convex sets $A,B\subset X$    such that $A\cap B$ is nonempty and bounded, the regularity of the couple $(A,B)$ does not imply in general that  $(A,B)$ is stable. In particular, we cannot replace $d$-stability with stability in the statement of Theorem~\ref{teo: mainHilbert}.

\begin{example}[{\cite[Example~4.4]{DebeMigl}}] \label{ex: notconverge}
		Let $X=\R^2$ and let us consider the following compact convex subsets of $X$:
		\begin{eqnarray*}
			A&=&\textstyle \conv\{(1,1),(-1,1),(1,0),(-1,0)\};\\ 
					B&=&\textstyle\conv\{(1,-1),(-1,-1),(1,0),(-1,0)\}.
		\end{eqnarray*} 
	Then the couple $(A,B)$ is regular (see \cite[Theorem~3.9]{BauschkeBorwein93}) but not stable (see \cite[Example~4.4]{DebeMigl}). 
\end{example}  

{Now, the following example shows that, even in finite dimension, the hypothesis concerning the boundedness of the sets $E,F$ cannot be dropped in the statement of Theorem~\ref{teo: mainHilbert}. }   

\begin{example}\label{ex: EnotBounded}
Let $A,B$ be the  subsets of $\R^3$ defined by
$$A=\{(x,y,z)\in\R^3;\, z=0, y\geq 0\},\ \ \ B=\{(x,y,z)\in\R^3;\, z=0\},$$
then the following conditions hold:
\begin{enumerate}
	\item[(a)]  $A\cap B$ coincides with $A$ (and hence $(A,B)$ is regular); 
\item[(b)] $A\cap B$  is not bounded;
\item[(c)] the couple $(A,B)$ is not $d$-stable.
\end{enumerate}
\end{example}

The proof of (a) and (b) is trivial. To prove (c), we need the following lemma, whose elementary proof is left to the reader.

\begin{lemma}\label{lemma: example fin-dim} Let $A,B$ be defined as in Example~\ref{ex: EnotBounded}. For each $n\in\N$ and  $x_0\geq1$, let $P_{n,x_0}^1,P_{n,x_0}^2,P_{n,x_0}^3\in\R^3$  be defined by 
$$\textstyle
P_{n,x_0}^1=(x_0+n x_0,-1,0),\ \ 
P_{n,x_0}^2=(x_0+n x_0+\frac{1}{n x_0},0,0),\ \ 
P_{n,x_0}^3=(0,\frac1n,\frac1n).
$$	
Let $t_{n,x_0}$ be the line in $\R^3$ containing the points $P_{n,x_0}^1$ and $P_{n,x_0}^3$, and let $r_{n,x_0}$ be the ray    in $\R^3$ with initial point $P_{n,x_0}^1$ and containing the points  $P_{n,x_0}^2$. Let $A_{n,x_0},B_{n,x_0}$ be the closed convex subsets of $\R^3$ defined by
	$$A_{n,x_0}=\conv(t_{n,x_0}\cup r_{n,x_0}),\ \ \ B_{n,x_0}=\{(x,y,z)\in\R^3;\, z=0\}.$$
Then the following conditions hold.
\begin{enumerate}
	\item 
	for each $N\in\N$,
	$\textstyle \lim_n h_N(A_{n,x_0},A)= 0$, uniformly with respect to $x_0\geq1$;
	\item  For each $n\in\N$ and  $x_0\geq 1$, the alternating projections sequences, relative to the sets $A_{n,x_0},B_{n,x_0}$ and starting point $(x_0,0,0)$, converge to  $P^1_{n,x_0}$.  
\end{enumerate}	
	
\end{lemma}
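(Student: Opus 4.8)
Throughout, write $P^{i}:=P^{i}_{n,x_0}$ and put
\[
d_t:=P^3-P^1=\Bigl(-(n+1)x_0,\ \tfrac{n+1}{n},\ \tfrac1n\Bigr),\qquad
d_r:=P^2-P^1=\Bigl(\tfrac1{nx_0},\ 1,\ 0\Bigr),
\]
and $e_3:=(0,0,1)$. The whole argument rests on three immediate identities: $\langle d_t,d_r\rangle=-\tfrac{n+1}{n}+\tfrac{n+1}{n}=0$; $\langle e_3,d_r\rangle=0$; and $\langle e_3,d_t\rangle=\tfrac1n$. In particular $\{d_t,d_r\}$ is an orthogonal basis of the plane $\Pi$ through $P^1,P^2,P^3$. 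Since $r_{n,x_0}$ is a ray issuing from the point $P^1$ of the line $t_{n,x_0}$ with direction $d_r\notin\R d_t$, the convex hull of a line and such a ray is the closed half-plane
\[
A_{n,x_0}=\{\,P^1+\alpha d_t+\beta d_r:\ \alpha\in\R,\ \beta\ge0\,\},
\]
whose boundary is $t_{n,x_0}=\{P^1+\alpha d_t:\alpha\in\R\}$; and since $(d_t)_z=\tfrac1n\neq0=(d_r)_z$, one has $A_{n,x_0}\cap B_{n,x_0}=\{P^1+\beta d_r:\beta\ge0\}=r_{n,x_0}$.

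To prove (2) I would show, by induction on $k$, that $b_k-P^1\perp d_r$, where $\{a_k\},\{b_k\}$ denote the alternating projection iterates in the paper's notation. The base case holds because $b_1=P_{B_{n,x_0}}(x_0,0,0)=(x_0,0,0)$ and $(x_0,0,0)-P^1=(-nx_0,1,0)$, which is $\perp d_r$. For the inductive step: since $d_t\perp d_r$, minimising $\|b_k-P^1-\alpha d_t-\beta d_r\|^2$ over $\alpha\in\R,\beta\ge0$ splits variable‑by‑variable, and the hypothesis $b_k-P^1\perp d_r$ forces the optimal $\beta$ to be $0$; hence $a_k=P_{A_{n,x_0}}(b_k)=P^1+\alpha_k d_t\in t_{n,x_0}$ with $\alpha_k=\langle b_k-P^1,d_t\rangle/\|d_t\|^2$. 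Projecting onto $\{z=0\}$ removes only the $z$–coordinate, so $b_{k+1}=a_k-z(a_k)e_3=P^1+\alpha_k\bigl(d_t-\tfrac1n e_3\bigr)$, which is again $\perp d_r$ (as $\langle e_3,d_r\rangle=0$), closing the induction; along the way one reads off
\[
\alpha_{k+1}=\frac{\langle b_{k+1}-P^1,d_t\rangle}{\|d_t\|^2}=\alpha_k\Bigl(1-\tfrac{1}{n^2\|d_t\|^2}\Bigr).
\]
Because $\|d_t\|^2>1/n^2$, this ratio lies in $(0,1)$, so $\alpha_k\to0$ geometrically; as $d_t$ and $d_t-\tfrac1n e_3$ are fixed vectors, $a_k=P^1+\alpha_k d_t\to P^1$ and $b_k=P^1+\alpha_{k-1}\bigl(d_t-\tfrac1n e_3\bigr)\to P^1$, which is (2).

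For (1), fix the localisation radius $N$; I claim $h_N(A_{n,x_0},A)\le C_N/n$ for all $n\ge N$, with $C_N$ independent of $x_0\ge1$. Using the coordinates above, a point $w=P^1+\alpha d_t+\beta d_r$ of $A_{n,x_0}$ satisfies $z(w)=\alpha/n$, $\beta=y(w)+1-(n+1)z(w)$, and $x(w)=(n+1)x_0(1-\alpha)+\tfrac{\beta}{nx_0}$. If $\|w\|\le N$, then $x(w)\le N$, $\beta\ge0$ and $x_0\ge1$ give $1-\alpha\le\tfrac{N}{n+1}$, whence (for $n\ge N$) $\alpha>0$, whence $\beta\le y(w)+1\le N+1$; feeding this back into $x(w)\ge-N$ gives $\alpha\le1+\tfrac{2N+1}{n+1}$. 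Therefore $|z(w)|=\alpha/n\le C_N/n$ and $y(w)=-1+\alpha\tfrac{n+1}{n}+\beta\ge\tfrac{1-N}{n}$, so $\dist(w,A)\le C_N/n$; this bounds $e_N(A_{n,x_0},A)$. Conversely, given $w=(x,y,0)\in A$ with $\|w\|\le N$, solving $x(w')=x$, $y(w')=y$ for a point $w'=P^1+\alpha d_t+\beta d_r$ forces $\alpha=1+O(1/n)$ uniformly in $x_0\ge1$ (the denominator in the explicit formula for $\alpha$ is a multiple of $(n+1)x_0\ge n+1$), so $|z(w')|\le C_N/n$; if the resulting $\beta$ is $\ge0$ then $w'\in A_{n,x_0}$ and $\|w-w'\|=|z(w')|\le C_N/n$, while if $\beta<0$ — which forces $y\le C_N/n$ — I instead take $\beta=0$ and $\alpha=1-\tfrac{x}{(n+1)x_0}$, obtaining $w'\in t_{n,x_0}\subset A_{n,x_0}$ with $|z(w')|,|y(w')|\le C_N/n$, hence again $\|w-w'\|\le C_N/n$. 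This bounds $e_N(A,A_{n,x_0})$, proving (1).

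The coordinate arithmetic is routine; the step requiring genuine care is (1), where one must verify that every estimate is uniform in $x_0\ge1$ — which works precisely because the relevant large quantities are multiples of $(n+1)x_0\ge n+1$, big independently of $x_0$ — together with the case distinction that the sign constraint $\beta\ge0$ defining the half‑plane $A_{n,x_0}$ forces in the bound on $e_N(A,A_{n,x_0})$.
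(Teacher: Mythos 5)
Your proof is correct. The paper itself gives no argument here --- it explicitly leaves the ``elementary proof'' of this lemma to the reader --- so there is nothing to compare against; your computations (the orthogonality $d_t\perp d_r$, the identification of $A_{n,x_0}$ as a closed half-plane, the induction showing $b_k-P^1_{n,x_0}\perp d_r$ so that each $a_k$ lands on the line $t_{n,x_0}$, the contraction factor $1-\tfrac{1}{n^2\|d_t\|^2}\in(0,1)$ giving $a_k\to P^1_{n,x_0}$, and the uniform-in-$x_0$ bounds $h_N(A_{n,x_0},A)\le C_N/n$ exploiting that the large coefficients are multiples of $(n+1)x_0\ge n+1$) all check out and correctly supply the omitted details.
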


\begin{proof}[Sketch of the proof of Example~\ref{ex: EnotBounded}, (c)] 
Fix the starting point $a_0=(1,0,0)$ and let $A_{1,1},B_{1,1}$, be defined by Lemma~\ref{lemma: example fin-dim}. Observe that, if we consider the points $a^1_k=(P_{A_{1,1}} P_{B_{1,1}})^k a_0$ ($k\in\N$), by Lemma~\ref{lemma: example fin-dim}, (ii), there exists $N_1\in\N$ such that $\mathrm{dist}(a^1_{N_1}, A\cap B)\geq\frac12$.
Define $A_n=A_{1,1}$ and $B_n=B_{1,1}=B$, whenever $1\leq n\leq N_1$. Then define $A_{N_1+1}=A$ and $B_{N_1+1}=B$, and observe that $a^2_0:=P_A P_B a^1_{N_1}=(x_1,0,0)$ for some $x_1\geq1$. Similarly,  if we consider the points $a^2_{k}=(P_{A_{2,x_1}} P_{B_{2,x_1}})^k a^2_{0}$ ($k\in\N$), then there exists $N_2\in\N$  such that $\mathrm{dist}(a^2_{N_2}, A\cap B)\geq\frac12$. 
Define $A_n=A_{2,x_1},B_n=B_{2,x_1}=B$, whenever $N_1+1< n\leq N_2$. Then define $A_{N_2+1}=A,B_{N_2+1}=B$, and observe that $a^3_{0}:=P_A P_B a_{N_2}^{2}=(x_2,0,0)$ for some $x_2\geq1$.

 Then, proceeding inductively, we can construct  sequences $\{A_n\}$ and $\{B_n\}$ such that, by Lemma~\ref{lemma: example fin-dim}, (i), $A_n\to A$ and $B_n\to B$ for the Attouch-Wets convergence.
  Moreover, by our construction,  
 it is easy to see that the corresponding perturbed alternating projections sequences  $\{a_n\}$ and $\{b_n\}$,  with starting point $a_0$, are such that 
 $$\textstyle \limsup_n \mathrm{dist}(a_n,A\cap B)\geq \frac12.$$
 This proves that the couple $(A,B)$ is not $d$-stable.
\end{proof}

%
%

Finally, we conclude with an open problem asking whether the inverse of Theorem~\ref{teo: mainHilbert} holds true.

\begin{problem} 
Let $A,B$ be   closed convex nonempty subsets of $X$ such that $E$ and $F$ are nonempty and bounded. Suppose that the couple $(A,B)$ is $d$-stable. Does the couple $(A,B)$ is regular?	
\end{problem}

\section*{Acknowledgements.}
The research of the authors is partially
supported by GNAMPA-INdAM, Progetto GNAMPA 2020. {The second author is also partially supported by the Ministerio de Ciencia, Innovación y Universidades (MCIU), Agencia Estatal de Investigación (AEI) (Spain) and	Fondo Europeo de Desarrollo Regional (FEDER) under project PGC2018-096899-	B-I00 (MCIU/AEI/FEDER, UE)}
%
%
%

\end{document}